\newtheorem{example}{Example}
\newtheorem{remark}{Remark}
\newtheorem{theorem}{Theorem}
\newtheorem{lemma}{Lemma}
\newtheorem{proposition}{Proposition}
\newtheorem{corollary}{Corollary}
\newcommand{\C}{\ensuremath{\mathbb{C}}\xspace}
\renewcommand{\P}{\ensuremath{\mathcal{P}}\xspace}
\newcommand{\q}{\ensuremath{\mathcal{Q}}}
\renewcommand{\L}{\ensuremath{\Lambda}}
\newcommand{\g}{\ensuremath{\mathfrak{g}}}
\newcommand{\G}{\ensuremath{\mathfrak{G}}}
\renewcommand{\H}{\ensuremath{\mathfrak{H}}}
\newcommand{\N}{\ensuremath{\mathfrak{N}}}
\newcommand{\m}{\ensuremath{\mathfrak{m}}}
\newcommand{\p}{\ensuremath{\mathfrak{p}}}
\newcommand{\B}{\ensuremath{\mathfrak{B}}}
\newcommand{\n}{\ensuremath{\mathfrak{N}}}
\newcommand{\Z}{\ensuremath{\mathbb{Z}}\xspace}
\newcommand{\htt}{\operatorname{ht}\xspace}
\newcommand{\Spec}{\operatorname{Spec}\xspace}
\renewcommand{\phi}{\varphi}
\def\D{\Delta}
\begin{document}
\title{Structure of parabolically induced modules for  affine Kac-Moody algebras}
\author{Vyacheslav Futorny}
\author{Iryna Kashuba}

\address{ Institute of Mathematics, University of
S\~ao Paulo, Caixa Postal 66281 CEP 05314-970, S\~ao Paulo,
Brazil}\email{futorny@ime.usp.br} \email{kashuba@ime.usp.br}
\maketitle
\centerline{Dedicated to Efim Zelmanov in the occasion of his 60th birthday}

\begin{abstract}
The main result of the paper establishes the irreducibility of a large family of nonzero central charge induced modules 
over Affine Lie algebras for any non standard parabolic subalgebra. It generalizes all previously known partial results and provides a 
a construction of many new irreducible modules.

\end{abstract}

\section{Introduction}\label{s1}
Let $\G$ be an  affine
Kac-Moody algebra with
 a $1$-dimensional center
$Z=\C c$ and  a fixed Cartan subalgebra. 

The main problem in the representation theory of  affine Kac-Moody algebras is a classification of all irreducible weight representations.
Such classification is known in various subcategories of weight modules, e.g. in the category $\mathcal O$, in its generalizations \cite{F1}, \cite{C1}, \cite{FS},  
in the category of modules with finite dimensional weight multiplicities and nonzero central charge
\cite{FT}.  An important tool in the construction of representations of  affine Lie algebras
is a parabolic induction. The conjecture (\cite{F2}, Conjecture 8.1)
indicates that induced modules are construction devices for
irreducible weight modules. This conjecture is known to be true for
$A_1^{(1)}$ (\cite{F4}, Proposition 6.3), for $A_2^{(2)}$, \cite{Bu}  and for all  affine Lie
algebras in the case of modules with finite-dimensional weight
spaces  \cite{FT}, \cite{DG}.

Simplest case of parabolic induction corresponds to the induction from Borel subalgebras.  Standard
 examples of Borel subalgebras arise from taking  partitions of
the root system.   For  affine
algebras there is always a finite number of conjugacy
classes by the Weyl  group of such partitions and corresponding Borel subalgebras. 
Verma type modules induced from these  Borel subalgebras were first studied and classified by Jakobsen
and Kac \cite{JK1, JK2}, and by Futorny \cite{F1, F3}, and were further developed in \cite{C1}, \cite{FS}, \cite{F2}, \cite{F4} and
references therein. We will consider a more  general definition of a Borel subalgebra (see below).

Nontrivial (different from Borel) parabolic subalgebras are divided into two groups, those with finite dimensional Levi subalgebras and those with infinite dimensional one. In this paper we are interested in the second case.  The simplest non trivial example is given by a  parabolic subalgebra whose  Levi factor is the Heisenberg subalgebra together with Cartan subalgebra. 
Corresponding induced modules were studied in  recent papers \cite{FK2} and \cite{BBFK}.  It was shown that any irreducible $\Z$-graded module 
 over the Heisenberg subalgebra with a nonzero central charge induces  
the irreducible $\G$-module. 
In \cite{FK1} a similar reduction theorem was shown for pseudo parabolic subalgebras. These parabolic subalgebras give a particular class of non-solvable parabolic subalgebra  of $\G$ with infinite dimensional 
Levi factor. The main results of \cite{FK1} states that in this case the parabolic induction preserves irreducibility if the central charge is nonzero.  The technique used in the proofs in \cite{FK1} and \cite{FK2} are different and somewhat complementary.

The main purpose of the present paper is to show that in the
affine setting  both these cases of parabolic induction (and hence all known cases) can be extended to a more general result for modules
with nonzero central charge. 

 For any Lie algebra $\mathfrak a$ we denote by $U(\mathfrak a)$  the universal enveloping algebra of  $\mathfrak a$.




Denote by $G$ the Heisenberg subalgebra of $\G$ generated by all imaginary root subspaces of $\G$.
Let $\P\subset \G$ be a  parabolic subalgebra of $\G$ such that $\P=\mathfrak l\oplus \mathfrak n$ is a Levi decomposition and $\mathfrak l$ is an infinite dimensional Levi factor.  Denote by $\mathfrak l^0$ the Lie subalgebra of $\mathfrak l$ generated by all its real root subspaces and $\H$. Let $G({\mathfrak l})$ be a subalgebra of $\mathfrak l^0$ spanned by its imaginary root subspaces.
Then $\mathfrak l=\mathfrak l^0+ G_{\mathfrak l}$ where  $G_{\mathfrak l}\subset G$ is the orthogonal complement of  $G({\mathfrak l})$ in $G$ with respect to the Killing form, that is 
$G=G({\mathfrak l})+G_{\mathfrak l}$, $[G_{\mathfrak l}, \mathfrak l^0]=0$ and  $\mathfrak l^0\cap G_{\mathfrak l}=\mathbb C c$.

For a Lie algebra $\mathfrak{a}$  containing the Cartan subalgebra $\H$ we say that a module $V$ is a {\em weight} module if $V=\oplus_{\mu\in \H^*}
V_{\mu}$, where $$V_{\mu}=\{v\in V|hv=\mu(h)v, \forall h\in \H\}.$$ 
We denote by $\mathcal W_{\G}$ (respectively, $\mathcal{W}_{\mathfrak l}$) the category of weight $\G$-modules (respectively, $\mathfrak l$-modules) with respect to the common Cartan subalgebra $\H$ of both $\G$ and $\mathfrak l$. We say that a module $V$ from either category has a nonzero central charge if the central element of $\G$ acts on $V$ as a nonzero scalar.  If $N\in  \mathcal{W}_{\mathfrak l}$ then denote by ${\rm ind}_{N}(\P, \G)$ the induced $\G$-module $U(\G)\otimes_{U(\P)}N$, where $\mathfrak n N=0$. This defines a functor ${\rm ind}(\P, \G)$
from the category  $\mathcal{W}_{\mathfrak l}$  to the category  $\mathcal W_{\G}$. Denote by $\widetilde{\mathcal{W}}_{\mathfrak l}$ the full subcategory of $\mathcal{W}_{\mathfrak l}$ consisting of those modules on which the central element $c$ acts injectively and  
let $\widetilde{{\rm ind}}(\P, \G)$ be the restriction of ${\rm ind}(\P, \G)$ onto $\widetilde{\mathcal{W}}_{\mathfrak l}$. 

Since $\mathfrak l$ is a sum of two commuting Lie subalgebras  $\mathfrak l^0$ and $G_{\mathfrak l}$ then a natural way to construct irreducible modules in 
$\widetilde{\mathcal{W}}_{\mathfrak l}$ is to take a tensor product of an irreducible weight module $L$ over $\mathfrak l^0$ with a $\mathbb Z$-graded irreducible module $T$ over 
$G_{\mathfrak l}$ with the same scalar action of $c$. We will call such modules {\em{tensor}}.  

For any positive integer $k$, denote $\G_k=\G_{k\delta}\oplus \mathbb C c\oplus \G_{-k\delta}$ (see notations in the next section). We say that a $\G_k$-module $S$ is 
$U(\G_{k\delta})$-surjective (respectively $U(\G_{-k\delta})$-surjective) if for any two elements $s_1, s_2\in S$ there exist $s\in S$ and  $u_1, u_2\in U(\G_{k\delta})$ (respectively, $u_1, u_2\in U(\G_{-k\delta})$) such that  $s_i=u_is$, $i=1,2$. 
A 
$G_{\mathfrak l}$-module $T$ is admissible if for any positive integer $k$,  any  cyclic $\G_k$-submodule $T'\subset T$ is $U(\G_{k\delta})$-surjective or $U(\G_{-k\delta})$-surjective.

A tensor module $L\otimes T$ is called {\em{admissible}} if $T$ is an admissible $G_{\mathfrak l}$-module. 
All known to us examples of irreducible  $\mathfrak l$-modules are admissible tensor modules. 
On the other hand we do not have sufficient evidence to expect that admissible tensor modules  exhaust all irreducible modules in $\widetilde{\mathcal{W}}_{\mathfrak l}$  or even tensor modules.

Our main result is the following theorem:

\begin{theorem}\label{the-main} Let $\P\subset \G$ be a parabolic subalgebra of $\G$ such that $\P=\mathfrak l \oplus \mathfrak n$ is a Levi
decomposition and $\mathfrak l$ is infinite dimensional Levi factor. Then  $\widetilde{{\rm ind}}_N(\P, \G)$ is an irreducible
$\G$-module for any irreducible admissible tensor module $N$ from $\widetilde{\mathcal{W}}_{\mathfrak l}$.
 \end{theorem}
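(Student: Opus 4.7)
The plan is to show that any nonzero submodule $V$ of $M := \widetilde{\rm ind}_N(\P,\G) = U(\G)\otimes_{U(\P)} N$ equals $M$. Since $N$ is irreducible as an $\mathfrak l$-module, it suffices to produce a nonzero vector in $V \cap (1\otimes N)$: irreducibility of $N$ then yields $V \supset 1 \otimes N$, and since $1\otimes N$ generates $M$ under the action of $U(\mathfrak n^-)$ (where $\mathfrak n^-$ denotes the opposite nilradical), this forces $V = M$. Both $G(\mathfrak l)$ and $G_{\mathfrak l}$ sit inside $\mathfrak l$, so $\mathfrak n^-$ is spanned entirely by real root spaces. By the PBW theorem, $M \cong U(\mathfrak n^-) \otimes N$ as a vector space, and every element of $M$ has a unique expansion $\sum_I u_I \otimes n_I$ relative to an ordered PBW basis $\{u_I\}$ of $U(\mathfrak n^-)$.

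Starting from a nonzero weight vector $v \in V$, I would introduce a well-ordering on PBW expansions (for instance, by total degree in $\mathfrak n^-$, broken by a lexicographic order on root multisets) and pick a minimal $v$ in its $U(\G)$-orbit. If the support of $v$ is not concentrated at the trivial multi-index, select a real root vector $e_\alpha \in \mathfrak n$ annihilating the top PBW factor. Computing $e_\alpha\cdot v$ through iterated commutators $[e_\alpha, u_I]$ produces tensors of the form $u'_I \otimes x_I n_I$ with $u'_I$ of strictly lower PBW degree and $x_I \in \mathfrak l$. Decomposing $x_I = x_I^0 + x_I^1$ with $x_I^0 \in \mathfrak l^0$ and $x_I^1 \in G_{\mathfrak l}$, the $\mathfrak l^0$-piece acts through the tensor factor $L$, while $x_I^1$ acts only on the Heisenberg factor $T$.

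The crucial step is controlling the $x_I^1$-contributions. Each $x_I^1$ lies in some $\G_{\pm k\delta}$; admissibility of $T$ implies that the cyclic $\G_k$-submodule $T' \subset T$ generated by the relevant vector is either $U(\G_{k\delta})$- or $U(\G_{-k\delta})$-surjective. Combined with the nonzero central action of $c$ and the relation $[\G_{k\delta},\G_{-k\delta}]\subset \C c$, this surjectivity property lets us represent any two entries of $T'$ as $U(\G_{\pm k\delta})$-images of a common element of $T'$, and hence reabsorb $x_I^1 \cdot t$ back into the $U(\mathfrak n^-)$-side of the tensor without increasing complexity in the chosen well-ordering. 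Combining the real-root reduction with this admissibility-driven manipulation contradicts the minimality of $v$ unless $v \in 1\otimes N$, which is what we need.

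The technical heart of the argument — and the main obstacle — is arranging the reduction so that the PBW-lowering effect of $e_\alpha$ is not undone by $G_{\mathfrak l}$-contamination arising in the commutators. I expect this to require a double filtration: one measuring degree in $U(\mathfrak n^-)$, and a secondary one measuring the maximal imaginary level $k$ appearing in the $x_I^1$ components, together with a joint induction that handles all such $k$ simultaneously. Once the bookkeeping is set up, the remainder is essentially routine PBW straightening; irreducibility of $N = L \otimes T$ as a module over the commuting pair $(\mathfrak l^0, G_{\mathfrak l})$ with matched central charge then closes the argument.
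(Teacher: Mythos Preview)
Your overall architecture is the same as the paper's: reduce to showing that any nonzero weight vector $v\in M$ can be pushed into $1\otimes N$ by applying elements of $\mathfrak n$. But the mechanism you propose for the crucial step is not the one that actually works, and your description of how admissibility enters is off.

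The paper does not use total PBW degree in $U(\mathfrak n^-)$. It uses a \emph{$J$-height} that counts only the $\bar J$-part of the weight, i.e.\ the coefficients of the simple roots \emph{not} in $J$; this is bounded and well suited to the fact that $\mathfrak n^-$-roots are of the form $-\phi+k\delta$ with $\phi$ having positive $\bar J$-part but $k$ unbounded. For $\mathsf{ht}_J(v)>1$ one can always lower the height by one (this is cited from \cite{BBFK}). The paper then states explicitly that this argument \emph{breaks down} at $\mathsf{ht}_J(v)=1$, and this is where all the work lies. Your plan does not single out this case and offers no substitute.

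At height $1$, one has $v=\sum_{r} d_r w_r$ with $d_r\in\mathfrak n^-$ of degree $-\phi_r+k_r\delta$ and $w_r\in N$. Applying $t\in\G_{-\phi_{r_0}+N\delta}\subset\mathfrak n$ gives
\[
tv=\sum_r [t,d_r]\,w_r\in 1\otimes N,
\]
which certainly has the correct height, but the whole difficulty is to show that \emph{for some} $N$ this is nonzero. The paper's key input (Lemmas~\ref{lem-heis} and~\ref{lem-tensor-module}) is a nonvanishing statement: if both the expressions obtained with $+N$ and with $-N$ vanished for all large $N$, one would force $[x_N,x_{-N}]\cdot w=0$, contradicting the nonzero central charge. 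Your proposal contains no such nonvanishing argument; the phrase ``routine PBW straightening'' hides precisely the step that fails.

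Your reading of admissibility is also different from the paper's. You invoke $U(\G_{\pm k\delta})$-surjectivity to ``reabsorb $x_I^1\cdot t$ back into the $U(\mathfrak n^-)$-side''. But $x_I^1\in G_{\mathfrak l}\subset\mathfrak l$ acts entirely inside $N$; there is nothing to reabsorb, and no secondary filtration by ``imaginary level'' is needed. In the paper, admissibility is used much more narrowly, inside Lemma~\ref{lem-heis}: it lets one assume that the degrees $k_i$ of the $u_i\in U(G)$ satisfy $k_i+k_j\neq 0$ for $i\neq j$, so that all the cross-commutators $[x_{N-k_i},x_{-N-k_j}]$ and $[u_i,u_j]$ vanish and the commutator bookkeeping leading to $[x_N,x_{-N}]v\neq 0$ goes through cleanly.

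In short: the skeleton is right, but the heart of the proof is the $\mathsf{ht}_J=1$ nonvanishing argument via $[x_N,x_{-N}]$, and your plan does not supply it.
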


We see that this result is quite general allowing one to induce from an arbitrary irreducible admissible tensor 
 $\mathfrak l$-module   with nonzero central charge and to construct many new irreducible modules over $\G$. 
This generalises the results from \cite{FK1}, \cite{FK2}, \cite{BBFK}.
In fact, it is possible to go beyond the category of weight modules but some grading is required in order to apply the same technique.   
Note that the proof of Theorem 1 in \cite{FK2} is only valid for admissible $G$-modules, it is a particular case of the theorem above.

 All results in
the paper hold for both {\em untwisted} and {\em twisted}  affine
Lie algebras.

\section{Preliminaries}

 We address to \cite{K}
for the basics of the Kac-Moody theory.
The  affine Lie algebra $\G$ has the  root
decomposition
$$
\G = \H \oplus (\oplus_{\alpha \in \Delta}
\G_\alpha),
$$
where $\G_\alpha = \{ x \in \G \, | \, [h, x] = \alpha(h) x
{\text{ for every }} h \in \H\}$ and $\Delta$ is  the root
system of $\G$. 
Let $\pi$ be a fixed basis of the root system $\D$.
Then the root system $\D$ of  has a natural partition into positive
and negative roots with respect to $\pi$, $\D_+$ and $\D_-$ respectively. 
Let $\delta\in \D_+(\pi)$ be the
indivisible imaginary root.
  Then the set of all imaginary roots is $\Delta^{im}=\{k\delta|
k\in\Z\setminus\{0\}\}$.  Let $G= \oplus_{k \in \Z \setminus \{0\}} \G_{k\delta} \oplus \mathbb C c$, a Heisenberg subalgebra of $\G$. Then $G$ has 
a triangular decomposition $G = G_- \oplus \C c \oplus G_+$, where
$G_{\pm} = \oplus_{k>0} \G_{\pm k \delta}$. 


  Denote by  ${\mathfrak g}$
the underlined simple finite dimensional Lie algebra that has $\{\alpha_1, \ldots, \alpha_N\}\subset \pi$ as set of simple roots and $\dot \D = \dot \D_+ \cup \dot \D_-$ be
a decomposition of the  root system $\dot \D$ of ${\mathfrak g}$ into positive and negative to this set of simple roots.
When there are  roots of two lengths in ${\mathfrak g}$, we  set $\dot \D_l$ and $\dot \D_s$ to be the long and short roots in $\dot \D$ respectively.
 The real  roots  $\D^{\mathsf {re}}$ of $\D$  can be described as follows: $\D^{\mathsf {re}}=S\cup -S$, where
 \begin{equation}\label{eq:posreal}
S = \begin{cases}   \{ \alpha + n\delta\ |\ \alpha \in \dot \D_+, \, n \in \Z\},
 \hspace{1.7 truein}  \hbox{\rm  if $r = 1$  (the untwisted case),  }  \\
 \{ \alpha + n\delta\ |\ \alpha \in (\dot \D_s)_+, n \in \Z\}\, \cup \,  \{ \alpha + nr\delta\ |\ \alpha \in (\dot \D_l)_+, n \in \Z \}
  \qquad \hbox{\rm  if $r = 2,3$ and} \\
  \hspace{4.7 truein} \hbox{\rm  not $\mathsf{A}_{2\ell}^{(2)}$ type,}  \\
 \{ \alpha + n\delta\ |\ \alpha \in (\dot \D_s)_+, n \in \Z\}\, \cup \,  \{ \alpha + 2n\delta\ |\ \alpha \in (\dot \D_l)_+, n \in \Z \} \\
 \hspace{1.2 truein} \cup \, \{\frac{1}{2}\left ( \alpha + (2n-1)\delta\right) \ |\ \alpha \in (\dot \D_l)_+, n \in \Z \} \qquad  \qquad \hbox{\rm  if $\mathsf{A}_{2\ell}^{(2)}$ type.}  \\
\end{cases}
\end{equation}

\section{Parabolic induction}

In this section we consider our main tool of constructing new modules - parabolic induction.
We start with the discussion of the Borel subalgebras.

\subsection{Borel subalgebras}

A subalgebra $\B\subset \G$ is called a {\em Borel subalgebra} if it contains $\H$ (and hence has a root decomposition) and there exists an automorphism $\sigma$ of $\G$ satisfying 

\begin{itemize}
\item $\sigma(\H)=\H$;
\item $\B+\sigma(\B)=\G$;
\item $\B\cap\sigma(\B)=\H$.
\end{itemize}

Note that this definition is more general than the usual definition of Borel subalgebras associated with closed partitions of root systems \cite{F1}, \cite{DFG}. 

 Consider a  subset $P\subset \D$ such that 
 $P\cap (-P)=\emptyset$ and $P\cup
(-P)=\D$.  Denote by $\B_P$   a Lie  subalgebra of $\G$
generated by $\H$ and the root spaces
${\mathfrak G}_{\alpha}$ with $\alpha \in P$.  We will say that $P$ is
 a {\em quasi partition} of $\D$ if for any root $\alpha$ of $\B_P$ we have $\alpha\in P$. 
  Note that in a contrast with a usual concept of a {\em partition} of the root systems \cite{F1}
  we do not require $P$ to be a closed subset with respect to the sum of roots, that is whenever $\alpha$
 and $\beta$
are in $P$ and $\alpha + \beta$ is a root, then $\alpha + \beta
\in P$.  A subset of  all real roots of any quase partition is  closed with respect to the sum of the roots while it is not necessary for imaginary roots (cf. \cite{BBFK}).  Clearly,  $\B_P$ is a Borel subalgebra for any quase partition $P$. These subalgebras are a main source of examples of Borel subalgebras, though they do not exhaust all of them as shown in the following remark.

\begin{remark}\label{imag-multiple}
In each subspace $G_{k\delta}$ we can choose a commuting basis $x_{k1}, \ldots, x_{k s_k}$ such that 
  $[x_{ki}, x_{-kj}]=\delta_{ij}$ for any $k$ and all $i,j$. 
We can define a triangular decomposition of $G$: $G=G_1\oplus \mathbb C c\oplus G_{-1}$, where  for each nonzero $k$,  $G_{k\delta}\subset G_i$ implies    $G_{-k\delta}\subset G_{-i}$, $i=1, -1$.
 On the other hand we can obtain a more general triangular decomposition
    by splitting for every $k$,  $x_{kj}$ and $x_{-kj}$ between $G_1$ and $G_{-1}$ for each $j$ independently, $j=1, \ldots, s_k$. Each such triangular decomposition can be extended to the following decomposition of $\G$. Denote by $B_{\pm}$ a Lie subalgebra of $\G$ generated by all root spaces $\G_{\pm\beta}$, 
 $\beta\in \{\alpha+k\delta|\alpha \in \dot{\Delta}_+, k\in \Z\}$ and $G_{\pm 1}$. Then 
 $$\G=B_-\oplus \H\oplus B_+$$ and   $ B=\H\oplus B_+$ is a Borel subalgebra of $\G$. In particular,  if $x_{kj}$ are in the same $G_i$ 
 for all positive $k$ and all $j=1, \ldots, s_k$ then $B$ corresponds to a partition of the root system. If for some positive $k\neq m$, $x_{kj}$ and 
  $x_{m r}$  belong to different  $G_i$ but for any $k$,  $x_{kj}$ are in the same $G_i$ for all $j=1, \ldots, s_k$, then $B$ corresponds to a quase partition of the root system (cf. $\B_{\rm nat}^{\phi}$ below). 
\end{remark}

Classification of partitions of the root system and corresponding Borel subalgebras was obtained  in \cite{JK1} and \cite{F1}.  Classification of quasi partitions follows from \cite{BBFK}. Finally, the classification of all Borel subalgebras defined above can easily be deduced from Remark \ref{imag-multiple}.    

There are two extreme Borel subalgebras, the {\rm standard} Borel subalgebra which corresponds to the partition $P=\Delta_+$ and
 the {\em natural} Borel subalgebra $\B_{\rm nat}=\B_P$ which corresponds to the partition 
$$P_{\rm nat}=\{\alpha+k\delta|\alpha \in \dot{\Delta}_+, k\in \Z\}\cup \{n\delta| n> 0\}.$$
These Borel subalgebras are not conjugated by the Weyl group. For other conjugacy classes of Borel subalgebras by the Weyl group see \cite{F1}.    
We will be interested mainly in  $\B_{\rm nat}$ in this paper.  Starting from this Borel subalgebra one can construct a family of twisted subalgebras as in \cite{BBFK}. 
For a function $\phi: \mathbb{N} \rightarrow \{\pm \}$, set
 $$
P^{\phi}= \{\alpha+k\delta|\alpha \in \dot{\Delta}_+, k\in \Z\}
\cup \{n \delta\ |\  n \in \mathbb N, \phi(n)=+\}\cup \{-m \delta\
|\ m \in \mathbb N, \phi(m)=-\}
$$
and $\B_{\rm nat}^{\phi}=\B_{P^{\phi}}$.  Further examples of Borel subalgebras can be obtained by combining $\phi$-twisting with the procedure described in Remark~\ref{imag-multiple}.

\subsection{Parabolic subalgebras}

A subalgebra $\P\subset \G$ is called a {\em parabolic subalgebra} if it contains a Borel subalgebra.  There are essentially two types of parabolic subalgebras: those containing the standard Borel and those containing one of the twisted Borel subalgebras $\B_{\rm nat}^{\phi}$.  We call them {\em type I} and {\em type II} parabolic subalgebras respectively (cf. \cite{F2} for details). We address in the paper the parabolic subalgebras of {\em type II}. Even though all the results of this paper are valid for all parabolic subalgebras of {\em type II} we will assume for simplicity that a fixed parabolic subalgebra $\P$ of {\em type II} contains $\B_{\rm nat}$. We will describe all such $\P$'s which contain properly $\B_{\rm nat}$ (cf. also Proposition 3.3, \cite{FK1}).

Let $N$ be the rank of the underlined simple finite dimensional Lie algebra $\dot{\mathfrak g}$ and $\pi_0=\{\alpha_1, \ldots, \alpha_N\}\subset \pi$  the set of simple roots of  
$\dot{\mathfrak g}$.  Set   $I = \{1, \dots, N\}$ and choose any proper subset 
 $J \subset I$.  Let $\pi^J = \{ \alpha_j
\in \pi\ |\ j \in J\}$.  
Denote by $\dot \D^J$  the finite root system generated by the 
roots in $\pi^J$ ( $\dot \D^J = \emptyset$  if $J
= \emptyset$).  Now consider the affinization
$$
\D^J = \{ \alpha + n\delta \in \D\ |\ \alpha \in \dot \D^J, n \in \Z\} \cup
\{n\delta \ |\ n \in \Z \setminus \{0\} \},
$$
of $\dot \D^J$ (in $\D$).

Define $\P_J=\B_{\rm nat}+\sum_{\alpha\in \D^J}\G_{\alpha}$. This is the parabolic subalgebra of type II associated with $J$. 
Again it admits modifications as in Remark~\ref{imag-multiple} but we will not consider it though the main statement remains valid also in this case.  If $J
= \emptyset$ then $\P_{\emptyset}=\B_{\rm nat}+G$. 
The parabolic subalgebra
$\P_J$ has the  Levi decomposition
$\P_J=\mathfrak l_J\oplus \mathfrak n_J$ where $\mathfrak l_J$ is a Lie subalgebra generated by $\G_{\alpha}$ with $\alpha\in \D^J$ and 
$\mathfrak n_J=\sum_{\alpha\in P_{\rm nat}\setminus \D^J}\G_{\alpha}$.  Note that $\mathfrak l_{\emptyset}=G+\H$.

Denote $\mathfrak n_{\bar J}=\sum_{\alpha\in -P_{\rm nat}\setminus \D^J}\G_{\alpha}$. Then we have the following decomposition of $\G$:
$\G=\mathfrak n_{\bar J}\oplus \P_J= \mathfrak n_{\bar J}\oplus \mathfrak l_J\oplus \mathfrak n_J$. Note that this may not be a triangular decomposition in the classical
sense of \cite{MP}. Nevertheless this decomposition allows us to construct families of induced modules.

 We say that 
a subset $S \subseteq J$ is connected if the Coxeter-Dynkin diagram
associated to the simple roots $\alpha_i$, $i\in S$ is connected.  Then
$J=\cup_{t\in T}S_t$ where $S_t$'s are connected components of the
Coxeter-Dynkin diagram associated to $J$.  Each subset $S_t$ gives rise to an affine root subsystem of 
$\D^J $ which generates an
 affine Lie subalgebra $\mathfrak l_J(S_t)\subset \mathfrak l_J$.  Also denote by $G_J\subset
G$  the orthogonal completion (with respect to the Killing form)
of the Heisenberg subalgebra  $G(\mathfrak l_J)$ of $\mathfrak l_J$. Then $G=G_J+G(\mathfrak l_J)$,
$[G_J, \mathfrak l_J]=0$ and we have $$\mathfrak l_J=\sum_{t\in T}\mathfrak l_J(S_t) + G_J +\H.$$

\subsection{Induced modules}

We will assume that  $\P_J$ is the parabolic  subalgebra of type II associated with fixed subset $J$, $\P_J=\mathfrak l_J\oplus \mathfrak n_J$.
 Let $N$ be a weight (with respect to $\H$) module over  $\P_J$   with a
trivial action of $\mathfrak n_J$.
Define the induced $\G$-module
$$M_{J}(N)={\rm ind}_N(\P_J, \G).$$

This is the {\em generalized Imaginary Verma module} associated with $J$ and $N$. 

  If $N$ is irreducible then  $M_{J}(N)$  has a
 unique irreducible quotient
$L_{J}(N)$. Basis properties of  modules $M_J(N)$ are collected in the following proposition.  Details of the proofs can be
found in \cite{F2}.

\begin{proposition} Let  $J \subseteq 
I$ and $N$ irreducible weight $\mathfrak l_J$-module.  Then $M_J(N)$ has the following properties. \begin{itemize}
\item[(i)] The module $M_J(N)$ is a free
$U(\frak{n}_{\bar J})$-module.
 \item[(ii)]  Let $V$ be a nonzero
$\G$-module generated by a  weight vector $v$ such that $\frak{n}_{J}v=0$.  Set $N=U(\mathfrak l_J)v$. Then 
 there exists a unique surjective homomorphism
$\psi:M_J(N) \mapsto V$ such that $\psi (1\otimes v) = v$. If $V$ is irreducible then 
  $N$ is irreducible $\mathfrak l_J$-module and $V\simeq L_J(N)$.
 \item[(iii)] $M_J(N)$ is a weight module. Moreover, $0 < \dim M_J(N)_{\mu} <
\infty$ if and only if $\mu$ is a weight of $N$ and $0 < \dim  N_{\mu}< \infty$.
\item[(iv)] If $N$ is irreducible $\mathfrak l$-module then the subspace of $\mathfrak n_J$-invariants in $L_J(N)$ is the class of $1\otimes N$.
\item[(v)] If $J=\emptyset$ and $N$ is a highest weight $G$-module generated by a weight vector $v$ such that  $hv=\lambda(h)v$ for any $h\in \H$ and some $\lambda\in \H^*$ 
and $\mathfrak n_{\emptyset} v=0$ then 
$M_{\emptyset}(N)$ is an Imaginary Verma type module $M(\lambda)$ generated by an eigenvector for the natural Borel subalgebra $\B_{\rm nat}$.
\end{itemize}
\end{proposition}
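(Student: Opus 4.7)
The overall strategy is to derive all five claims from the vector-space decomposition $\G=\mathfrak n_{\bar J}\oplus \mathfrak l_J\oplus \mathfrak n_J$ and the PBW theorem, combined with elementary universal-property arguments. PBW gives $U(\G)\cong U(\mathfrak n_{\bar J})\otimes_\C U(\P_J)$ as a right $U(\P_J)$-module; tensoring over $U(\P_J)$ with $N$ (on which $\mathfrak n_J$ acts trivially) produces $M_J(N)\cong U(\mathfrak n_{\bar J})\otimes_\C N$, free as a $U(\mathfrak n_{\bar J})$-module, which proves (i). For (ii), the map $\psi:u\otimes x\mapsto u\cdot x$ is well-defined because $\mathfrak n_J$ is an $\mathfrak l_J$-stable ideal of $\P_J$, so the relation $\mathfrak n_J v=0$ propagates to $\mathfrak n_J\cdot U(\mathfrak l_J)v=0$. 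Surjectivity and uniqueness follow from $V=U(\G)v$ and from $M_J(N)$ being generated by $1\otimes v$. If $V$ is irreducible but $N'\subsetneq N$ is a proper $\mathfrak l_J$-submodule, then the $\G$-submodule $U(\G)(1\otimes N')\cong U(\mathfrak n_{\bar J})\otimes N'\subsetneq M_J(N)$ would descend to a proper nonzero $\G$-submodule of $V$, a contradiction; hence $N$ is irreducible and $V\simeq L_J(N)$.

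For (iii), the PBW isomorphism is compatible with the $\H$-grading, giving $M_J(N)_\mu=\bigoplus_\nu U(\mathfrak n_{\bar J})_{\mu-\nu}\otimes N_\nu$. The weights of $U(\mathfrak n_{\bar J})$ are nonnegative integral sums of elements of $-P_{\rm nat}\setminus\D^J=\{-\alpha+k\delta\mid \alpha\in\dot\Delta_+\setminus\dot\D^J,\ k\in\Z\}$; projecting onto $\Span(\pi_0\setminus\pi^J)$ shows that such a sum lies in the root lattice $\Z\dot\D^J+\Z\delta$ of $\mathfrak l_J$ only when it is zero. Consequently, for $\mu$ a weight of $N$, only $\nu=\mu$ contributes, giving $\dim M_J(N)_\mu=\dim N_\mu$. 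Conversely, if $\mu$ is not a weight of $N$ but $M_J(N)_\mu$ is nonzero, any decomposition $\mu=\nu+\beta$ has $\beta\neq 0$; varying the imaginary coefficients attached to the real-root summands of $\beta$ produces independent PBW monomials, and varying $\nu$ along the $\Z\delta$-string of weights supplied by the Heisenberg action on $N$ provides infinitely many independent contributions, forcing $\dim M_J(N)_\mu=\infty$.

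For (iv), let $K\subset M_J(N)$ be the maximal $\G$-submodule. Because $N$ is irreducible, $K\cap(1\otimes N)$ is either $0$ or all of $1\otimes N$; the latter would force $K=M_J(N)$, so $K\cap(1\otimes N)=0$ and $1\otimes N$ embeds in $L_J(N)$ as a subspace of $\mathfrak n_J$-invariants. For the reverse inclusion, any $\mathfrak n_J$-invariant weight vector $\bar w\in L_J(N)$ of weight $\mu$ lifts to a weight vector in $M_J(N)$; combining (iii) with $\dim L_J(N)_\mu\leq\dim M_J(N)_\mu$, the weight $\mu$ must be a weight of $N$, and the equality $\dim M_J(N)_\mu=\dim N_\mu$ from (iii) then forces $\bar w$ into the class of $1\otimes N$. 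Finally, for (v), when $J=\emptyset$ we have $\mathfrak l_\emptyset=G+\H$; the highest-weight hypothesis on $v$ over $G$ means $G_+v=0$, so $1\otimes v$ is a $\lambda$-eigenvector annihilated by every root space of $\B_{\rm nat}\setminus\H$, and $M_\emptyset(N)=U(\G)(1\otimes v)$ is an Imaginary Verma type module $M(\lambda)$.

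The hardest step will be the reverse inclusion in (iv): confirming that the $\mathfrak n_J$-invariants in the irreducible quotient do not introduce new classes outside the image of $1\otimes N$. The argument needs to combine the weight restrictions of (iii) with the fact that $\mathfrak n_J$ strictly raises weights across the PBW decomposition $U(\mathfrak n_{\bar J})\otimes N$, so that an invariant class must already be realised by an element of the degree-zero piece $1\otimes N$; care is required because $L_J(N)$ may a priori have infinite-dimensional weight spaces away from the weight support of $N$, and one must rule out cancellations between positive-degree PBW terms that conspire to produce an invariant modulo $K$.
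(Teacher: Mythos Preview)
The paper itself does not prove this proposition but refers to \cite{F2}; your PBW and universal-property approach is the standard one and is largely correct, but your argument for the irreducibility of $N$ in part (ii) contains a genuine error. You claim that if $N'\subsetneq N$ is a proper nonzero $\mathfrak l_J$-submodule, then the image of $U(\G)(1\otimes N')$ under $\psi$ is a \emph{proper} submodule of $V$. That is false: the image is $U(\G)N'$, and since $V$ is irreducible and $N'\neq 0$ it equals all of $V$. The correct argument uses the $J$-height. From $V=U(\G)N'=U(\mathfrak n_{\bar J})N'$ (using $\mathfrak n_J N'=0$ and $U(\mathfrak l_J)N'=N'$) one writes $v=\sum_i u_i n_i'$ with $u_i\in U(\mathfrak n_{\bar J})$ homogeneous and $n_i'\in N'$ weight vectors; since all weights of $N'\subset N=U(\mathfrak l_J)v$ lie in $\lambda+Q^J$ while every nonzero homogeneous piece of $U(\mathfrak n_{\bar J})$ has strictly positive $J$-height and hence weight outside $Q^J$, each $u_i$ must be scalar, so $v\in N'$ and $N=U(\mathfrak l_J)v\subseteq N'$, a contradiction.

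The same $J$-height mechanism closes the gap you correctly flag in (iv). If $\bar w\in L_J(N)$ is a nonzero $\mathfrak n_J$-invariant weight vector of weight $\mu$, then $L_J(N)=U(\mathfrak n_{\bar J})U(\mathfrak l_J)\bar w$, so every weight of $L_J(N)$ has $J$-height (relative to $\lambda$) at least $\mathsf{ht}_J(\mu-\lambda)$; since $\lambda$ itself occurs as a weight of $L_J(N)$ with $J$-height zero, one gets $\mathsf{ht}_J(\mu-\lambda)\leq 0$, and combined with $\mathsf{ht}_J(\mu-\lambda)\geq 0$ (as $\mu$ is a weight of $M_J(N)$) this forces $\mu\in\lambda+Q^J$ and hence $M_J(N)_\mu=1\otimes N_\mu$. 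Your proposed route via the dimension statement (iii) alone does not suffice: (iii) permits $\dim M_J(N)_\mu=\infty$ when $\mu$ is not a weight of $N$, and that places no constraint on $L_J(N)_\mu$.
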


\begin{remark}
\begin{itemize}
 \item[(i)] Generalized loop modules considered in \cite{FK2} and, in particular,  $\phi$-Imaginary Verma modules \cite{BBFK} are partial cases of modules $M_{\emptyset}(N)$
  when $N$ is irreducible $G$-module;
   \item[(ii)] Pseudo-parabolic induction considered in \cite{FK1} is a particular case of  modules $M_{J}(N)$ where   $ G_J N=0$. 
\end{itemize}
\end{remark}

 Set $M^t_J(N) := 1\otimes N$. This is the "top" part of 
  $M^t_J(N)$  which generates $M_{J}(N)$.

\subsubsection{Tensor $\mathfrak l_J$-modules}

Denote $\mathfrak l_J^0=\sum_{t\in T}\mathfrak l_J(S_t)+\H$. Hence $\mathfrak l_J=\mathfrak l_J^0+ G_J$ and $[\mathfrak l_J^0, G_J]=0$.  If $V$ is an irreducible weight 
$\mathfrak l_J^0$-module and $W$ is an irreducible $\mathbb Z$-graded $G_J$-module with the same central charge then $V\otimes W$ is naturally an 
$\mathfrak l_J$-module, a tensor module.  If the central charge is $a\in \mathbb C$ then $V\otimes W$ is a module over the tensor product $U(\mathfrak l_J^0)/(c-a)\otimes U(G_J)/(c-a)$. 

In the extreme case when $J=\emptyset$ we have $\mathfrak l_J^0=\H$ and  $G_J=G$. Hence $V$ is a $1$-dimensional space and $W$ is a $\mathbb Z$-graded $G$-module. 
Parabolic induction functor from $G$ to $\G$ was considered in \cite{FK2}. 

Suppose now that $J\neq \emptyset$. Let $N$ be an irreducible weight $\mathfrak l_J$-module with a nonzero central charge $a$. Assume that for each positive integer $k$ 
either $(G_J)_{k\delta}$ or  $(G_J)_{-k\delta}$ acts trivially on $N$. Such $\mathfrak l_J$-modules were considered in \cite{FK1} when inducing from pseudo parabolic subalgebras. 
We will show that any such irreducible module is a tensor module. Without loss of generality we assume that $(G_J)_{k\delta}N=0$ for all positive integer $k$.

\begin{proposition} 
 $N$ is a tensor $\mathfrak l_J$-module, that is $N\simeq V\otimes W$ where $V$ is irreducible weight $\mathfrak l_J^0$-module and $W$ is irreducible 
 $\mathbb Z$-graded $G_J$-module.
\end{proposition}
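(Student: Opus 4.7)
The plan is to decompose $N$ as a tensor product via a Schur / double-centralizer argument, exploiting the fact that $\mathfrak l_J^0$ and $G_J$ commute inside $\mathfrak l_J$ with shared center $\mathbb C c$ acting on $N$ as the nonzero scalar $a$. By PBW one has
\[
U(\mathfrak l_J)/(c-a) \;\cong\; U(\mathfrak l_J^0)/(c-a) \;\otimes_{\mathbb C}\; U(G_J)/(c-a),
\]
so $N$ becomes an irreducible module over a tensor product of two commuting associative algebras, and the task reduces to finding a tensor factorization $N\cong V\otimes W$.

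The first step is to produce an irreducible $\mathbb Z$-graded $G_J$-submodule of $N$. Pick a nonzero weight vector $v_0\in N$; the hypothesis $(G_J)_+v_0 = 0$ together with $c$ acting as $a\neq 0$ identifies $W := U(G_J)v_0 \subseteq N$ as a highest-weight Fock-type $G_J$-module. The classical Stone--von Neumann-type irreducibility for Heisenberg algebras with nonzero central charge ensures $W$ is irreducible as a $\mathbb Z$-graded $G_J$-module (the $\mathbb Z$-grading inherited from the $\delta$-component of the $\H$-weight), and consequently $\operatorname{End}_{G_J}(W)=\mathbb C$ by Schur's lemma over $\mathbb C$.

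Next I would carry out the tensor decomposition. Set $V := \operatorname{Hom}_{G_J}(W, N)$, which carries an $\mathfrak l_J^0$-module structure via $(x\cdot f)(w) := x\cdot f(w)$ for $x\in\mathfrak l_J^0$ (well-defined because $[\mathfrak l_J^0, G_J]=0$). Consider the evaluation map
\[
\phi\colon V\otimes W\longrightarrow N,\qquad f\otimes w\longmapsto f(w),
\]
which is an $\mathfrak l_J$-module homomorphism. Its image contains $W$ (as $\mathrm{id}_W\otimes W$), hence is a nonzero $\mathfrak l_J$-submodule of $N$ and thus equals $N$ by irreducibility. For injectivity I would apply Jacobson density to $W$ over $U(G_J)/(c-a)$ and run a minimal-relation argument: given any $\sum f_i\otimes w_i\in\ker\phi$ with linearly independent $f_i$'s, density allows one to choose $u\in U(G_J)$ with $uw_1=0$ and $uw_j$ prescribed for $j>1$, producing a strictly shorter relation and ultimately forcing all $w_i=0$.

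Finally, $V$ is an irreducible $\mathfrak l_J^0$-module: any proper $\mathfrak l_J^0$-submodule $V'\subsetneq V$ would yield a proper $\mathfrak l_J$-submodule $V'\otimes W\subsetneq N$, contradicting the irreducibility of $N$. A weight $\H$-structure on $V$ is obtained from the natural decomposition $V=\bigoplus_\mu V_\mu$ by weight-shifts of $G_J$-morphisms (so that $f\in V_\mu$ sends $W_\nu$ into $N_{\nu+\mu}$), together with the $\H$-action on $W\subseteq N$ inherited from the weight decomposition of $N$. The main obstacle I anticipate is the injectivity of $\phi$, which requires a sufficiently strong density/Schur statement for $U(G_J)/(c-a)$ acting on $W$; a secondary care point is reconciling the $\H$-weight structure on $V$ with the $\mathfrak l_J^0$-action so that $V$ is genuinely a weight $\mathfrak l_J^0$-module.
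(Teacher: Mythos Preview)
Your argument is correct and takes a genuinely different route from the paper's. The paper works concretely: it fixes a single vector $v\in N$, sets $V=U(\mathfrak l_J^0)v$ and $W=U(G_J)v$ as explicit subspaces of $N$, and proves irreducibility of $V$ by an ad hoc trick exploiting the highest-weight structure of the Fock module. Namely, given $0\neq v'=xv\in V'$ for a putative proper submodule $V'$, irreducibility of $N$ produces $y\in U(G_J^-)$ with $yxv=v$; but then, since $U(G_J)xv$ is itself an irreducible Fock module, one can find $y'\in U(G_J^+)$ with $y'y$ acting as the identity on $xv$, forcing $xv=y'v=0$ because $G_J^+$ kills $N$. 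The map $f\colon V\otimes W\to N$, $xv\otimes yv\mapsto xyv$, is then checked to be well-defined and bijective by the same device. Your $\operatorname{Hom}_{G_J}(W,N)$ construction with the evaluation map and Jacobson density is more conceptual and more general: it would apply to any irreducible $G_J$-module $W$ with scalar endomorphism ring, not only to highest-weight Fock modules, and it sidesteps the well-definedness issue for the paper's map. The price is invoking density instead of the explicit $G_J^+$/$G_J^-$ combinatorics. Once either argument is complete the two $V$'s coincide, since your $V$ is irreducible and contains $\mathrm{id}_W$, whose $U(\mathfrak l_J^0)$-orbit corresponds under $\phi$ exactly to the paper's $U(\mathfrak l_J^0)v_0$. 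Your flagged concern about the weight structure is legitimate: for the action $(x\cdot f)(w)=xf(w)$ to land in $\operatorname{Hom}_{G_J}(W,N)$ one needs $[x,G_J]=0$, which fails for the degree derivation $d$; the paper's assertion $[\mathfrak l_J^0,G_J]=0$ is to be read modulo $d$ (equivalently, $W$ carries its own $\mathbb Z$-grading and $d$ acts diagonally on $V\otimes W$), and with that convention each $N_\lambda$ is $G_J$-stable, so $V=\bigoplus_\lambda\operatorname{Hom}_{G_J}(W,N_\lambda)$ is a genuine weight decomposition.
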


\begin{proof}
Choose nonzero element $v\in N$. Denote $V=U(\mathfrak l_J^0)v$ and $W=U(G_J)v$.  It is standard that $W$ is irreducible  $G_J$-module since the central charge is nonzero 
and $G_J^+v=0$. Suppose $V$ is not irreducible and $V'$ is a nonzero proper $\mathfrak l_J^0$-submodule. Take any nonzero $v'\in V'$. Then $v'=xv$ for some $x\in U(\mathfrak l_J^0)$. 
Since $N$ is irreducible there exists 
$y\in U(G_J)$ such that $yxv=v$. Moreover, we can assume that $y\in U(G_J^-)$. But the $G_J$-module $U(G_J)xv$ is irreducible. Hence, there exists $y'\in U(G_J^+)$ such that 
$y'yxv=y'v=xv$. But $y'v=0$, thus $v'=xv=0$ which is a contradiction. We conclude that $V$ is $\mathfrak l_J^0$-module. Then $V\otimes W$  is irreducible $\mathfrak l_J$-module. 
Consider a map $f:V\otimes W\rightarrow N$, sending $xv\otimes yv$ to $xyv$ which is clearly a homomorphism since $\mathfrak l_J^0$ and $G_J$ commute.
 We immediately see that $f$ is surjective since $N$ is irreducible. If $xyv=0$ then choose $y'$ as above. We have $0=y'yxv=xv$ and $f$ is injective. Therefore, $N\simeq V\otimes W$.
\end{proof}

These are all known cases when parabolic induction preserves irreducibility and in all cases we induce from certain tensor modules. Combining  techniques from  
\cite{FK1} and \cite{FK2} we will extend the proof to all tensor modules.

\section{Irreducibility of Generalized Imaginary Verma modules}
In this section we prove our main result by finding conditions for a module $M_{J}(N)$ to be irreducible. 

Let 
$\P_J=\mathfrak l_J\oplus \mathfrak n_J$ and $N\in W_{\mathfrak l}$. Denote by $T_J(N)$ the subspace of $\mathfrak n_J$-invariants in $M_J(N)$, that is $v\in T_J(N)$ 
if and only if $ \mathfrak n_J v=0$.

\begin{theorem}\label{thm-glav}
If $U$ is an irreducible admissible tensor module in $\widetilde{W}_{\mathfrak l}$ then
$T_J(U)=M^t_J(U)$. 
\end{theorem}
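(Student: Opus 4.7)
My plan is to prove the nontrivial inclusion $T_J(U)\subseteq M^t_J(U)$ (the other direction being immediate since $\mathfrak n_J$ annihilates $U$ on the top) by an inductive reduction on the PBW filtration of $U(\mathfrak n_{\bar J})$. Since $M_J(U)$ is a free $U(\mathfrak n_{\bar J})$-module on $M^t_J(U)$ (by the earlier proposition), every element has a unique normal form $v=\sum_X X\otimes n_X$, where $X$ ranges over an ordered PBW basis of $U(\mathfrak n_{\bar J})$ built from the negative real root vectors $x_\gamma$ with $\gamma\in -P_{\rm nat}\setminus\Delta^J$, and $n_X\in U$. Assume for contradiction that a weight vector $v\in T_J(U)\setminus M^t_J(U)$ exists, and let $d\geq 1$ be the maximal PBW length appearing among the nonzero terms with $X\neq 1$, with ties broken by a chosen lexicographic order on the generators.

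The core step is to apply a well-chosen $y_\beta\in \mathfrak n_J$ (so $\beta\in P_{\rm nat}\setminus \Delta^J$) to $v$: since $y_\beta U=0$, the relation $0=y_\beta v=\sum_X [y_\beta,X]\otimes n_X$ expands by the Leibniz rule, and the degree-$(d-1)$ part is obtained by replacing a single factor $x_{\gamma_i}$ of each maximal $X$ by $[y_\beta,x_{\gamma_i}]$ and then PBW-straightening. Choosing $\beta=-\gamma_1$ where $\gamma_1$ is the first factor of a chosen maximal monomial $X_0$ produces a leading contribution $[y_{-\gamma_1},x_{\gamma_1}]\in\H$, a nonzero Cartan element scaling $n_{X_0}$. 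The remaining degree-$(d-1)$ contributions come from commutators $[y_{-\gamma_1},x_{\gamma_i}]$ with $i\ne 1$ or from factors of other maximal monomials; each such commutator falls into $\mathfrak n_J$ (which annihilates $U$ and is pushed to the right), into $\mathfrak l_J^0\setminus \H$ or $G(\mathfrak l_J)\subset \mathfrak l_J^0$ (acting only on the $L$-factor of $U=L\otimes T$), into $G_J$ (acting only on the $T$-factor), or into $\mathfrak n_{\bar J}$ (reducing to strictly lower PBW degree). Matching the unique normal-form coefficients at degree $d-1$ and invoking the irreducibility of $L$ to control the $\mathfrak l_J^0$-action reduces the problem to forbidding $G_J$-contributions from cancelling the scalar leading term.

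The main obstacle is precisely this $G_J$ part: it shifts the $\mathbb{Z}$-grading of $T$ by various $k\delta$'s and is not obviously invertible on the finitely many $n_X$'s that appear. Here the admissibility of $T$ is essential: for each relevant $k$, every cyclic $\G_k$-submodule of $T$ is either $U(\G_{k\delta})$-surjective or $U(\G_{-k\delta})$-surjective. Repeating the argument with $\beta=-\gamma_1$ replaced by suitable positive real roots in $\mathfrak n_J$ of imaginary shift $\pm k\delta$ yields a symmetric system of linear relations among the maximal-length $n_X$'s; admissibility forces at least one imaginary direction to act surjectively on the cyclic $\G_k$-submodule they generate, so the only consistent solution at degree $d-1$ is $n_{X_0}=0$, and similarly for the other maximal monomials, contradicting the choice of $d$. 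Iterating over $d$ collapses $v$ into $M^t_J(U)$. The hardest technical point will be making the cancellation simultaneous across all PBW monomials of maximal degree and all imaginary shifts $k$ that appear; this is exactly the juncture where the PBW-leading-term reduction used for generalized imaginary Verma modules in \cite{FK2} must be fused with the Heisenberg-side admissibility trick of \cite{FK1}, as the theorem statement indicates.
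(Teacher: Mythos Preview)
Your strategy has the right shape, but there are two concrete gaps, and the paper organizes the induction differently to avoid them.

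First, the PBW degree is not the invariant the argument wants. When $[y_\beta,x_{\gamma_i}]$ lands in $\mathfrak n_{\bar J}$, the term $x_{\gamma_1}\cdots[y_\beta,x_{\gamma_i}]\cdots x_{\gamma_d}\otimes n_X$ is a product of $d$ elements of $\mathfrak n_{\bar J}$; after PBW straightening its leading part has degree $d$, not $d-1$, so your claim that this case ``reduces to strictly lower PBW degree'' is false. Those degree-$d$ contributions have to be controlled before any degree-$(d-1)$ comparison makes sense. The paper instead inducts on the $J$-height $\mathsf{ht}_J(v)$, the sum of the $\bar J$-coefficients in the root degree of the $u_i$. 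Every root of $\mathfrak n_{\bar J}$ has $J$-height $\ge 1$, so PBW degree $\le \mathsf{ht}_J$, and applying any $y_\beta\in\mathfrak n_J$ genuinely lowers $J$-height by at least one. The case $\mathsf{ht}_J(v)>1$ then becomes the easy step (Lemma~\ref{lem-ht-big}, referred to \cite{BBFK}).

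Second, your opening move $\beta=-\gamma_1$ gives a Cartan element $[y_{-\gamma_1},x_{\gamma_1}]\in\H$ whose eigenvalue on $n_{X_0}$ can perfectly well be zero, so there is no protected leading term. The paper never uses this choice. For the delicate case $\mathsf{ht}_J(v)=1$ each $u_i$ is a single root vector $d_r\in\G_{\phi_r}$, and one applies $t_{\pm N}\in\G_{-\phi_{r_0}\pm N\delta}\subset\mathfrak n_J$ for large $|N|$. Then $t_{\pm N}v=\sum_r[t_{\pm N},d_r]w_r$ already lies in the top $M^t_J(U)$, with the distinguished commutator $x_{\pm N}:=[t_{\pm N},d_{r_0}]\in\G_{\pm N\delta}$. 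The mechanism that prevents both $t_Nv$ and $t_{-N}v$ from vanishing is not a Cartan eigenvalue or a surjectivity statement on the $n_X$'s, but the nonzero central charge: Lemma~\ref{lem-tensor-module} (for $J\ne\emptyset$) and Lemma~\ref{lem-heis} (for $J=\emptyset$) show that if both expressions vanished one would force $[x_N,x_{-N}]w_1=0$, contradicting $c$ acting by a nonzero scalar. Admissibility is used only inside Lemma~\ref{lem-heis} to arrange that the degrees $k_i$ satisfy $k_i+k_j\ne 0$, not as the surjectivity input you describe.
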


The proof of Theorem \ref{thm-glav} combines the proofs of Theorem 2 from \cite{FK2} and Theorem 3.1 from \cite{FK1} where particular cases of parabolic induction were considered.

For any
subset $\omega\subset I$, let $Q^{\omega}_{\pm}$ denote a
semigroup of $\H^*$ generated by $\pm \alpha_i$, $i\in \omega$. Set   
$Q^J = \oplus_{j \in J} \Z \alpha_j
\oplus \Z\delta$ and $Q_{\pm}=Q^{I}_{\pm}$. Then $Q_{\pm}^J= Q^J \cap Q_{\pm}$.

Let $\alpha\in Q^{J}_{-}$
and $\alpha = -\sum_{j\in \omega} k_j \alpha_j$,
where each $k_j$ is in $\mathbb Z_{\geq 0}$. We set
$\mathsf {ht}_{J}(\alpha) = \sum_{j=1}^n k_j$, the \emph{J-height} of $\alpha$.

Let  $v\in M_J(N)$ be a
nonzero weight element.  Then $$v=\sum_{i\in R} u_iv_i,$$ for some finite set $R$, 
where $u_i\in U(\mathfrak n_{\bar{J}})$ are linearly independent homogeneous elements, 
$v_i\in N$, $i\in R$.  Since $v$ is a weight vector then each $u_i$ is a homogeneous element of $U(\mathfrak n_{\bar{J}})$. Its homogeneous degree is an element 
of $Q_-^{\bar{J}}+ Q^J+\Z \delta$. Suppose that $u_i$ has homogeneous degree  
 $$\phi_i= -\sum_{j\in \bar{J}} k_{ij} \alpha_j+\sum_{j\in J} l_{ij} \alpha_j+ m_i\delta,$$ where $k_{ij}\in\mathbb Z_{\geq 0}$ not all zeros, $l_{ij}\in\mathbb Z$ and $m_i\in \Z$.   
 Since $v$ is a weight vector then all $\phi_i$ have the same J-height. We will call it the {\em J-height} of $v$ and denote $\mathsf {ht}_{J}(v)$.

 \begin{lemma}\label{lem-ht-big}
 Suppose  $U\in \widetilde{W}_{\mathfrak l}$ and $v\in M_J(U)$  a
nonzero weight element such that $\mathsf {ht}_{J}(v)>1$. Then there exists $u\in U(\mathfrak n_J)$ such that $uv\neq 0$ and $\mathsf {ht}_{J}(uv)=\mathsf {ht}_{J}(v)-1$.
 
 \end{lemma}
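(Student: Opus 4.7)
The plan is to produce the desired $u$ as a single real root vector $x_\gamma\in \mathfrak n_J$ of $\bar J$-height one, i.e.\ $\gamma=\alpha_{j_0}+n\delta$ for some fixed $j_0\in\bar J$ and a carefully chosen $n\in\Z$; the role of the hypothesis $U\in\widetilde{W}_{\mathfrak l}$ is to give us enough freedom in the choice of $n$ to guarantee nonvanishing of $x_\gamma v$.

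First, the height calculation is automatic. Since every imaginary root lies in $\D^J$, both $\mathfrak n_J$ and $\mathfrak n_{\bar J}$ consist solely of real root spaces, and the root of any basis vector of $\mathfrak n_{\bar J}$ has strictly positive sum of $\bar J$-coefficients. Hence, for our $\gamma$ of $\bar J$-height one, whenever $x_\gamma v$ is nonzero it is a weight vector of weight $\operatorname{wt}(v)+\gamma$, whose $J$-height is then exactly $\mathsf{ht}_J(v)-1$. The only content of the lemma is therefore the nonvanishing of $x_\gamma v$ for a suitable $\gamma$.

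For this I would run a PBW leading-term argument. Fix a total order on the negative real roots of $\mathfrak n_{\bar J}$, refined so that roots of larger $|\delta|$-component come first, and write $v$ uniquely as $\sum_{\mathbf m} E_{\mathbf m}\otimes v_{\mathbf m}$, where $E_{\mathbf m}$ runs over ordered PBW monomials in $U(\mathfrak n_{\bar J})$ and $v_{\mathbf m}\in U$ are weight vectors, only finitely many nonzero. Since $\mathsf{ht}_J(v)\geq 2$, some $E_{\mathbf m_0}$ has at least two factors; pick a factor of the form $E_{-(\alpha_{j_0}+k_0\delta)}$ of it (possible because $\dot\D_+\setminus\dot\D^J$ contains a simple root $\alpha_{j_0}$ and each basis vector of $\mathfrak n_{\bar J}$ involves some such simple root with positive coefficient), and take $\mathbf m_0$ maximal in our order among monomials containing this factor with $v_{\mathbf m_0}\neq 0$. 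Applying $x_\gamma=E_{\alpha_{j_0}+n\delta}$ and using $\mathfrak n_JU=0$, one gets $x_\gamma v=\sum_{\mathbf m}[x_\gamma,E_{\mathbf m}]v_{\mathbf m}$. When this is re-sorted into PBW normal form, the coefficient of the monomial obtained from $E_{\mathbf m_0}$ by deleting one copy of $E_{-(\alpha_{j_0}+k_0\delta)}$ is, for $n=k_0$, a nonzero multiple of $\mu(h_{\alpha_{j_0}+k_0\delta})\,v_{\mathbf m_0}$, where $\mu$ is the weight of $v_{\mathbf m_0}$; all other contributions either land on strictly smaller PBW monomials (by maximality of $\mathbf m_0$) or, when they come from commutators falling into $\mathfrak l_J$, act on $U$ within $1\otimes U$ without reaching our leading monomial.

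To finish, since $h_{\alpha_{j_0}+n\delta}=h_{\alpha_{j_0}}+\kappa n c$ for a nonzero constant $\kappa$ and the central charge $\mu(c)$ is nonzero (as $U\in\widetilde{W}_{\mathfrak l}$), the scalar $\mu(h_{\alpha_{j_0}})+\kappa n\mu(c)$ is a nonconstant affine-linear function of $n\in\Z$, and hence is nonzero for all but at most one value of $n$. Any such $n$ makes the leading coefficient of $x_\gamma v$ nonzero, whence $x_\gamma v\neq 0$ and $\mathsf{ht}_J(x_\gamma v)=\mathsf{ht}_J(v)-1$. The main obstacle is the PBW bookkeeping: one must justify that in the re-expansion of the commutators $[x_\gamma,E_{\mathbf m}]v_{\mathbf m}$ into PBW normal form, no other term contributes to the coefficient of the chosen leading monomial. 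This is where the careful choice of total order (refined by $|\delta|$-component and by a fixed order on $\dot\D$) is essential, and where the argument parallels the analogous leading-term manipulations used in the proofs of Theorem~2 of \cite{FK2} and Theorem~3.1 of \cite{FK1}.
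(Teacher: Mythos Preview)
Your overall strategy --- hit $v$ with a single real root vector $x_\gamma\in\mathfrak n_J$ whose $\bar J$-height is one and then isolate a nonvanishing PBW leading term --- is exactly the approach the paper takes; the paper's proof is nothing more than an appeal to the induction step of Lemma~5.3 in \cite{BBFK}, which carries out precisely this kind of leading-term computation. The height bookkeeping in your first paragraph is correct. However, your execution of the nonvanishing step has two genuine gaps.

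\smallskip

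\emph{First gap: existence of a simple-root factor.} You assert that some monomial $E_{\mathbf m_0}$ has a factor of the form $E_{-(\alpha_{j_0}+k_0\delta)}$ with $\alpha_{j_0}$ simple, and your parenthetical justification is that every root of $\mathfrak n_{\bar J}$ involves a $\bar J$-simple root. That implication fails: take for instance $v=E_{-(\beta+k\delta)}\otimes w$ with $\beta\in\dot\D_+\setminus\dot\D^J$ of $\bar J$-height $\geq 2$. This $v$ has $\mathsf{ht}_J(v)\geq 2$, a single monomial with a single factor, and no factor of the shape you require; so neither ``some $E_{\mathbf m_0}$ has at least two factors'' nor ``there is a factor $E_{-(\alpha_{j_0}+k_0\delta)}$'' is valid. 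In such a case the commutator $[x_\gamma,E_{-(\beta+k\delta)}]$ lands back in $\mathfrak n_{\bar J}$ (or vanishes), never in $\H$, and your Cartan-scalar analysis is not applicable. The fix used in the literature is to take $\gamma=\psi+n\delta$ where $\psi$ matches the $\dot\D$-part of a suitably extremal factor of $v$, not necessarily a simple root, and then argue via the $\delta$-grading.

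\smallskip

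\emph{Second gap: the ``vary $n$'' step is inconsistent.} You set $n=k_0$ so that $[x_\gamma,E_{-(\alpha_{j_0}+k_0\delta)}]\in\H$ and the leading coefficient becomes $\mu(h_{\alpha_{j_0}+k_0\delta})$; then you let $n$ range freely over $\Z$ and invoke affine-linearity in $n$. But $k_0$ is not a free parameter: it is confined to the finitely many $\delta$-shifts that actually occur among the factors of $v$, and the weight $\mu=\operatorname{wt}(v_{\mathbf m_0})$ itself depends on which factor (hence which $\mathbf m_0$) you chose. So you do not have one affine function of a free integer; you have a finite list of scalars, all of which could in principle vanish. The role of the nonzero central charge in the $\mathsf{ht}_J>1$ step is not to make a Cartan eigenvalue nonzero, but rather (as in \cite{BBFK}) to let you shift $\gamma$ by a sufficiently large multiple of $\delta$ so that the leading PBW monomial after commutation is separated from all other contributions by its $\delta$-degree; freeness of $M_J(U)$ over $U(\mathfrak n_{\bar J})$ then gives $x_\gamma v\neq 0$. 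This is also why the case $\mathsf{ht}_J(v)>1$ is genuinely easier than $\mathsf{ht}_J(v)=1$, as the paper remarks immediately after the lemma.
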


 \begin{proof}
Let $v=\sum_{i\in R} u_iv_i,$
where $u_i\in U(\mathfrak n_{\bar J})$ are linearly independent homogeneous elements and 
$v_i\in U$ are nonzero elements, $i\in R$. We assume that for each $i$, $u_i$ has homogeneous degree  
 $\phi_i$ and all $\phi_i$ have the same J-height.  Then we can apply exactly the same argument as in the proof of the induction step 
 in Lemma 5.3 in \cite{BBFK}. We refer to \cite{BBFK} for details. 
 \end{proof}

 It follows immediately from Lemma \ref{lem-ht-big} that  $T_J(N)$ can not contain  nonzero elements of J-height $\mathsf {ht}_{J}(v)>1$. Indeed,  
 any such element would generated a proper submodule whose elements would have   J-heights $\geq \mathsf {ht}_{J}(v)$ which contradicts Lemma \ref{lem-ht-big}. 
 Note that the proof of Lemma \ref{lem-ht-big} does not work in the case when $\mathsf {ht}_{J}(v)=1$. This case requires a more delicate treatment.
 We consider first the case when $J=\emptyset$. This case was treated in \cite{FK2} where  
 the key point was Lemma 1. But the proof of this lemma is somewhat incomplete so we address the proof here in more details.
 
 Set  $\mathfrak l=\mathfrak l_{\emptyset}=G+\H$.

 \begin{lemma}\label{lem-heis}
 Let   $W\in \widetilde{W}_{\mathfrak l}$ be an irreducible and admissible, $v\in W$ a nonzero element and 
 $u_2, \ldots, u_s\in U(G)$  nonzero homogeneous elements of nonzero degrees $k_2, \ldots k_s$ respectively, that is $u_i\in U(G)_{k_i\delta}$, 
 such that $u_iv\neq 0$, $i=2, \ldots, s$, $k_i\neq k_j$ if $i\neq j$. Then  there exists
  $N\in \mathbb Z$ for which
$$z_N=x_N v+ \sum_{i=2}^s x_{N-k_i} u_i v\neq 0$$ for any choice of nonzero $x_k\in G_{k\delta}$ such that $[x_k, x_{-k}]\neq 0$.
\end{lemma}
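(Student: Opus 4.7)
I will argue by contradiction: suppose that for every $N\in \Z$ of sufficiently large absolute value there exist nonzero $x_k\in G_{k\delta}$ with $[x_k,x_{-k}]\neq 0$, for each $k\in\{N,N-k_2,\ldots,N-k_s\}$, such that $z_N=0$. Set $A:=x_N+\sum_{i=2}^s x_{N-k_i}u_i\in U(G)_{N\delta}$, so that $Av=0$. Since each $u_i$ is a fixed element of $U(G)$ supported on finitely many subspaces $G_{m\delta}$, there exists $K_0$ such that for $|N|>K_0$ we have $[u_i,G_{\pm N\delta}]=[u_i,G_{\pm(N-k_j)\delta}]=0$; moreover the elements $x_N,x_{N-k_2},\ldots,x_{N-k_s}$ commute pairwise (their degrees sum to nonzero integer multiples of $\delta$), and $G_{-N\delta}$ commutes with each $G_{(N-k_j)\delta}$ because $-N+(N-k_j)=-k_j\neq 0$.

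The decisive computation is that of the commutators with the dual lowering operators. Write $\alpha c:=[x_{-N},x_N]$ and $\beta_j c:=[x_{-(N-k_j)},x_{N-k_j}]$, both nonzero by the pairing hypothesis. The simplifications above collapse every cross-term and yield $[A,x_{-N}]=-\alpha c$ and $[A,x_{-(N-k_j)}]=-\beta_j c\,u_j$. Acting by $x_{-N}^n$ on $Av=0$ and using $A x_{-N}^n=x_{-N}^n A - n\alpha c\, x_{-N}^{n-1}$, induction on $n\geq 1$ gives
$$A\cdot x_{-N}^n v=-n\alpha a\cdot x_{-N}^{n-1}v,$$
where $a\neq 0$ is the central charge. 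In particular $x_{-N}^n v\neq 0$ for every $n$, since otherwise a downward induction forces $v=0$. The analogous calculation with $x_{-(N-k_j)}$ gives $A\cdot x_{-(N-k_j)}v=-\beta_j a\cdot u_j v\neq 0$, using the standing hypothesis $u_jv\neq 0$.

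To close the argument I invoke the admissibility of $W$. For each $k>0$ the cyclic $\G_k$-submodule $U(\G_k)v$ is $U(G_{k\delta})$- or $U(G_{-k\delta})$-surjective, so by pigeonhole one sign occurs for infinitely many $k$; after replacing $N$ by $-N$ if needed, I may pick $N$ in this infinite set with $|N|>K_0$. Applying surjectivity to the pair $(v,\,x_{-N}v)$ in $U(\G_N)v$ (and, in parallel, to analogous pairs in $U(\G_{N-k_j})v$) yields a common vector $s$ together with elements $u_1,u_2\in U(G_{-N\delta})$ such that $u_1s=v$ and $u_2s=x_{-N}v$. Since $[A,u_l]=[x_N,u_l]$ (all other brackets inside $A$ vanish against $u_l$) is a strictly lower-degree element of $U(G_{-N\delta})$ with a nonzero scalar coefficient in $c$, pushing $Av=0$ through $v=u_1s$ and comparing with the identity $A\cdot x_{-N}v=-\alpha a v$ produces a system of relations in the single vector $As$. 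Combined with the parallel identities arising from $x_{-(N-k_j)}$ and the already-established chain identities, this system becomes over-determined and forces $u_jv=0$ for some $j$, contradicting the hypothesis.

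The principal difficulty lies in this last step. Admissibility is a fairly weak structural hypothesis---it is satisfied by highest-weight and lowest-weight Fock modules as well as by more exotic Heisenberg modules---so the surjectivity it provides does not directly entail faithfulness of the $G_{\pm N\delta}$-action on $v$. The delicate part is to track the precise scalar information carried by the iterated commutators $[x_N,u_l]$ and the central charge $a$, so that the vanishing relations derived from surjectivity genuinely conflict with the nonvanishing of some $u_jv$ guaranteed by the chain identities of paragraph two.
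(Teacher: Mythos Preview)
Your proposal has a genuine gap at the decisive point. Everything through paragraph two is fine: defining $A=x_N+\sum_i x_{N-k_i}u_i$, computing $[A,x_{-N}]=-\alpha c$ and $[A,x_{-(N-k_j)}]=-\beta_j c\,u_j$, and deducing the chain identities $A\,x_{-N}^n v=-n\alpha a\,x_{-N}^{n-1}v$ are all correct and useful. But then the argument collapses into assertion. The sentence ``this system becomes over-determined and forces $u_jv=0$ for some $j$'' is not a proof, and your own final paragraph concedes as much. Surjectivity gives you $v=u_1s$, $x_{-N}v=u_2s$ with $u_1,u_2\in U(G_{-N\delta})$, but you never exhibit the actual linear relations that contradict one another; the commutators $[x_N,u_l]$ land in $U(G_{-N\delta})\cdot c$ and produce relations of the form $Au_1s=[x_N,u_1]s$, yet nothing prevents $As$ itself from being nonzero in a way that absorbs all of these consistently. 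There is no visible mechanism forcing a collision.

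The paper closes the argument by a much shorter route that you overlooked: assume simultaneously $z_N=0$ \emph{and} $z_{-N}=0$ for the same large $|N|$. Each bracket $[x_{N-k_j}u_j,\,x_{-N-k_j}u_j]$ vanishes (its total degree is $-2k_j\delta\neq 0$), so $\sum_j [x_{N-k_j}u_j,\,x_{-N-k_j}u_j]\,v=0$. But substituting $x_{\pm N-k_j}u_jv=-x_{\pm N}v-\sum_{i\neq j}x_{\pm N-k_i}u_iv$ into this sum and using $[u_i,u_j]=0$, $[x_{N-k_i},x_{-N-k_j}]=0$ (this is where admissibility enters, via the reduction $k_i+k_j\neq 0$) makes all cross-terms cancel, leaving exactly $-[x_N,x_{-N}]v$, which is a nonzero multiple of $v$. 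That is the contradiction. Your setup with the single operator $A$ is compatible with this idea---indeed the paper's computation is essentially $\sum_j[\text{$j$-th summand of }A,\ \text{$j$-th summand of }A']v$ where $A'$ is the analogue at $-N$---but you never bring $-N$ into play, and that pairing is what makes the argument terminate.
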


 \begin{proof}
 Since $W$ is admissible we can assume without loss of generality  that $k_i+k_j\neq 0$ for all $i\neq j$. This implies $[u_i, u_j]=0$ and $[x_{N-k_i}, x_{-N-k_j}]=0$ for all $i,j$.  
 Fix $N\in \mathbb Z$ and suppose
 that
$z_N=z_{-N}=0.$ We will assume $N$ sufficiently large.
We have $x_{N}v=-\sum_{i=2}^s x_{N-k_i} u_iv$ and $x_{-N}v=-\sum_{i=2}^s x_{-N-k_i}
u_iv$. Then 
$$[x_{N}, x_{-N}]v=(-x_N\sum_{i=2}^s x_{-N-k_i}u_i+x_{-N}\sum_{i=2}^s x_{N-k_i} u_i)v,$$
since $[x_{N}, u_i]=[x_{-N}, u_i]=[x_{N-k_i}, x_{-N}]=[x_{N}, x_{-N-k_i}]=0$.

Fix $j=2, \ldots, s$. Then 
$x_{N-k_j}u_jv=-x_{N}v-\sum_{2\leq i\neq j} x_{N-k_i} u_iv$ and $x_{-N-k_j}u_jv=-x_{-N}v-\sum_{2\leq i\neq j} x_{-N-k_i}
u_iv$. 
We have $$[x_{N-k_j}u_j, x_{-N-k_j}u_j]v= - x_{-N} x_{N-k_j} u_j v- x_{N-k_j}\sum_{2\leq i\neq j} x_{-N-k_i}u_j  u_i  v +$$
$$+  x_{N} x_{-N-k_j}u_j v +  x_{-N-k_j}u_j(\sum_{2\leq i\neq j} x_{N-k_i}u_i) v.$$ Since $k_j\neq 0$, $[x_{N-k_j}u_j, x_{-N-k_j}u_j]=0$ for all $j=2, \ldots, s$.
We sum up all these equalities for $j=2, \ldots, s$:
$$0=\sum_{j=2}^s (- x_{-N} x_{N-k_j} +  x_{N} x_{-N-k_j})u_j v     -   \sum_{j=2}^s   x_{N-k_j}(\sum_{2\leq i\neq j} x_{-N-k_i}u_j  u_i)  v$$
$$+  \sum_{j=2}^s     x_{-N-k_j}(\sum_{2\leq i\neq j} x_{N-k_i}u_ju_i) v  = $$
$$ \sum_{j=2}^s (- x_{-N} x_{N-k_j} +  x_{N} x_{-N-k_j})u_j v =  -[x_{N}, x_{-N}]v\neq 0  $$
which is a contradiction. Hence, $z_N=0$ or $z_{-N}=0$ which completes the proof.

 \end{proof}

  \begin{corollary}\label{cor-ht-empty}
 Suppose $J=\emptyset$,   $W\in \widetilde{W}_{\mathfrak l}$ is irreducible and $v\in M_J(W)$  a
nonzero weight element such that $\mathsf {ht}_{J}(v)=1$. Then there exists $d\in U(\mathfrak n_J)$ such that $dv\neq 0$ and 
$\mathsf {ht}_{J}(dv)=0$.
 \end{corollary}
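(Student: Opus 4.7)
The plan is to invoke Lemma~\ref{lem-heis} after rewriting $v$ in a convenient form. For $J=\emptyset$, the algebra $\mathfrak n_{\bar\emptyset}$ is spanned by the negative real root vectors $x_{-(\alpha+k\delta)}$ with $\alpha\in\dot\Delta_+$ and $k\in\Z$, and the $J$-height of such a generator equals the classical height of $\alpha$. Hence $\mathsf{ht}_J(v)=1$ forces every monomial in $v$ to be a single root vector $x_{-(\alpha_{j(i)}+k_i\delta)}$ with $\alpha_{j(i)}$ a simple root, and after grouping by the simple root index one writes
\[
v \;=\; \sum_{i\in R} x_{-(\alpha_{j(i)}+k_i\delta)} \otimes v_i,\qquad v_i \in W\setminus\{0\},
\]
with the pairs $(j(i),k_i)$ pairwise distinct; set $I_j := \{i\in R : j(i)=j\}$ for each simple root index $j$.

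Next I would act on $v$ by $x_{\alpha_j+m\delta}\in\mathfrak n_\emptyset$ for some chosen $j$ with $I_j\neq\emptyset$. Since $\mathfrak n_\emptyset$ annihilates $W$, the action reduces to commutators. For indices $i$ with $j(i)\neq j$, the commutator vanishes because $\alpha_j-\alpha_{j(i)}$ is not a root of $\dot{\mathfrak g}$; for $j(i)=j$ one has $[x_{\alpha_j+m\delta},x_{-(\alpha_j+k_i\delta)}] = h_j\otimes t^{m-k_i}$, with an extra central term only in the diagonal case $m=k_i$. Taking $|m|>\max_i|k_i|$ removes those central contributions and yields
\[
x_{\alpha_j+m\delta}\cdot v \;=\; \sum_{i\in I_j} h_j^{(m-k_i)}\, v_i \;\in\; 1\otimes W,
\]
where $h_j^{(n)}:=h_j\otimes t^n$. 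Any nonzero outcome of this computation lies in $M^t_\emptyset(W)$ and hence automatically has $J$-height $0$.

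The main obstacle is ensuring that the resulting sum is nonzero for some $m$, and this is exactly where Lemma~\ref{lem-heis} enters. Fix an index $i_0\in I_j$, set $v_0:=v_{i_0}$, and define $a_i:=k_i-k_{i_0}$; then the $a_i$ are pairwise distinct and nonzero for $i\in I_j\setminus\{i_0\}$. Irreducibility of $W$ combined with the diagonal action of $\H$ gives $W=U(G)v_0$, and comparison of $\H$-weights produces homogeneous $u_i\in U(G)$ of $\delta$-degree $a_i$ with $u_iv_0=v_i\neq 0$ for each such $i$. Substituting $m=N+k_{i_0}$ rewrites the sum as
\[
h_j^{(N)}\, v_0 \;+\; \sum_{i\in I_j,\; i\neq i_0} h_j^{(N-a_i)}\, u_i\, v_0,
\]
which is precisely the expression $z_N$ of Lemma~\ref{lem-heis} with the choice $x_k:=h_j^{(k)}\in G_{k\delta}$; the hypothesis $[x_k,x_{-k}]\neq 0$ holds since $[h_j^{(k)},h_j^{(-k)}]=k(h_j,h_j)c$ acts on $W$ as a nonzero scalar. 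Lemma~\ref{lem-heis} then supplies an $N$ (also chosen with $|N+k_{i_0}|>\max_i|k_i|$ to suppress central terms) for which this expression is nonzero, and $d:=x_{\alpha_j+(N+k_{i_0})\delta}$ completes the proof. The most delicate point is the rewriting $v_i=u_iv_0$, which relies crucially on irreducibility of $W$ and on $\H$ acting by scalars, so that every $\H$-weight shift between $v_0$ and $v_i$ is realized inside $U(G)$ of the matching $\delta$-degree.
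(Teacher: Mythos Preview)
Your proof is correct and follows essentially the same route as the paper's own argument: write $v$ as a sum of height-one root vectors applied to elements of $W$, act by a suitable $d=x_{\alpha_j+m\delta}\in\mathfrak n_\emptyset$ so that only commutators survive, express the surviving $w_r$'s as $u_r w_{r_0}$ using irreducibility of $W$, and then invoke Lemma~\ref{lem-heis} to find an $m$ making the result nonzero.

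Your write-up is in fact slightly more careful than the paper's on one point. The paper asserts that ``all $u_r$ have different homogeneous degrees as $d_r$ were linearly independent,'' which as stated is not quite right when the $d_r$ lie over different simple roots $\alpha_j$; two such $w_r$ can have the same $\H$-weight. You handle this correctly by first observing that the commutator $[x_{\alpha_j+m\delta},d_r]$ vanishes whenever $j(r)\neq j$ (since $\alpha_j-\alpha_{j(r)}$ is not a root), so only the block $I_j$ contributes, and within that block the $k_i$ are distinct so the degrees $a_i$ are genuinely distinct. One small remark: Lemma~\ref{lem-heis} is stated for \emph{admissible} $W$, a hypothesis not repeated in the corollary; your proof (like the paper's) tacitly uses it, which is harmless since the corollary is only applied inside Theorem~\ref{thm-glav} under that assumption.
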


 \begin{proof}
 Since  $v\in M_J(W)$ a
 weight element then $$v=\sum_{r\in R} d_rw_r,$$
where $d_r\in \mathfrak n_{\bar \emptyset}$ are linearly independent, $w_r\in W$, $r\in R$. Fix $r_0\in R$ and assume $d_{r_0}\in \G_{\phi}$.  Choose an integer 
$N$ and
  let $d\in\G_{-\phi+N\delta}\subset \mathfrak n_{\emptyset}$ be a nonzero element. Then $[d, d_{r_0}]\neq 0$ and we have $$dv=d d_{r_0}w_{r_0} + d \sum_{r\in R, r\neq r_0} d_r w_r=$$
  $$=[d, d_{r_0}]w_{r_0}+  \sum_{r\in R, r\neq r_0}[d, d_r]w_r.$$  Since $W$ is irreducible there exist $u_r\in U(G)$, $r\in R$ such that $w_r=u_rw_{r_0}$. Note that 
  all $u_r$ have different homogeneous degrees as $d_r$ were linearly independent. Note that $[d, d_{r_0}]\in G_{N\delta}$.  Applying Lemma~\ref{lem-heis} 
  for a sufficiently large $N$ we obtain $dv\neq 0$ and $\mathsf {ht}_{J}(dv)=0$. 
 \end{proof}

 Next we consider the case $J\neq \emptyset$. 
 
  \begin{lemma}\label{lem-tensor-module}
 Let $U$ be a tensor module in $\widetilde{W}_{\mathfrak l}$, $U\simeq V\otimes W$ where $V$ is irreducible weight $\mathfrak l_J^0$-module and $W$ is irreducible 
 $\mathbb Z$-graded $G_J$-module. Choose nonzero $v_1,\ldots, v_k\in V$ and nonzero $w_1,\ldots, w_k\in W$. Fix an integer $N>0$ and
   nonzero $x_{\pm N}\in G_{\pm N\delta}$, such that 
 $x_{\pm N}\notin \mathfrak l_J^0$ and $[x_N, x_{-N}]\neq 0$.
  Then for any
 $a_i, b_i\in U(\mathfrak l_J)$, $i=2, \ldots, k$ and sufficiently large $N$ we have 
 $$x_{-N}(v_1\otimes w_1) +\sum_{i=2}^k a_i v_i\otimes w_i\neq 0$$ or 
  $$x_{N}(v_1\otimes w_1) +\sum_{i=2}^k b_i v_i\otimes w_i\neq 0. $$

  \end{lemma}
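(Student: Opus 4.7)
The plan is to argue by contradiction via a weight-projection argument; the key observation is that, in contrast to the Heisenberg setting of Lemma~\ref{lem-heis}, the operators $a_i, b_i \in U(\mathfrak l_J)$ here are \emph{fixed} (independent of $N$), whereas $x_{\pm N}(v_1 \otimes w_1)$ has weight $\mu_1 \pm N\delta$, where $\mu_i$ denotes the $\H$-weight of $v_i \otimes w_i$ in the tensor module $V \otimes W$. This weight-growth outstrips the finite weight support of the $a_i$-contributions and will let us isolate $x_{\pm N}(v_1 \otimes w_1)$ by projecting onto the appropriate weight space.

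Concretely, I would assume for contradiction that both displayed sums vanish. Let $S_i \subset \H^*$ denote the (finite) $\H$-weight support of $a_i$ under the adjoint action, so that the weight support of $\sum_{i=2}^k a_i(v_i \otimes w_i)$ is contained in the finite set $\Omega_a = \bigcup_{i \geq 2}(\mu_i + S_i)$, and define $\Omega_b$ analogously from the $b_i$. Then choose $N$ so large that $\mu_1 - N\delta \notin \Omega_a$ and $\mu_1 + N\delta \notin \Omega_b$, which is possible because both sets are finite and do not depend on $N$. Projecting the first equation onto the $\mu_1 - N\delta$ weight space of $V \otimes W$ annihilates every $a_i$-term and forces $x_{-N}(v_1 \otimes w_1) = 0$; projecting the second equation onto the $\mu_1 + N\delta$ weight space analogously gives $x_{N}(v_1 \otimes w_1) = 0$.

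To finish, I would use the central charge. By hypothesis $[x_N, x_{-N}] = \gamma c$ with $\gamma \neq 0$, and since $V \otimes W \in \widetilde{\mathcal{W}}_{\mathfrak l}$ has nonzero central charge $a$, the central element $c$ acts on $v_1 \otimes w_1$ as $a \neq 0$; hence $[x_N, x_{-N}](v_1 \otimes w_1) = \gamma a (v_1 \otimes w_1) \neq 0$. On the other hand, the two vanishings from the previous paragraph yield $[x_N, x_{-N}](v_1 \otimes w_1) = 0$, giving the desired contradiction. The only point requiring some care is verifying that $V \otimes W$ really is a weight $\mathfrak l_J$-module in which $x_{\pm N}(v_1 \otimes w_1)$ lies in the $\mu_1 \pm N\delta$ weight space (including both the $G(\mathfrak l_J)$-component acting on $V$ and the $G_J$-component acting on $W$); this is built into the construction of tensor modules, where the $\Z$-grading of $W$ encodes its $\delta$-weight consistently with the $\H$-weights of $V$ and the central charges agree. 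Notably, unlike the proof of Lemma~\ref{lem-heis}, no delicate commutator identities are required here precisely because the ``noise'' terms have $N$-independent, bounded weight support.
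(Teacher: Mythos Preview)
Your argument is correct and is more direct than the paper's. Both proofs rest on the same underlying observation—that the ``noise'' terms $\sum_{i\ge 2} a_i(v_i\otimes w_i)$ have $\H$-weight (equivalently, $W$-grading) support bounded independently of $N$, whereas $x_{\pm N}(v_1\otimes w_1)$ drifts off by $\pm N\delta$—but they exploit it differently.

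The paper assumes $z=0$, then \emph{applies} $x_N$ (more precisely its $G_J$-component $x_N^1$, after decomposing $x_{\pm N}=x_{\pm N}^1+x_{\pm N}^2$ with $x_{\pm N}^1\in G_J$ and $x_{\pm N}^2\in\mathfrak l_J^0$) to the equation, and only then projects onto the $\mathbb Z$-grading of $W$ at level $\deg(w_1)$ to force $x_N^1 x_{-N}^1 w_1=0$; symmetry gives $x_{-N}^1 x_N^1 w_1=0$ and the contradiction. Your route bypasses both the application of $x_N$ and the splitting into $G_J$- and $\mathfrak l_J^0$-components: you project the two original equations directly onto the relevant $\H$-weight spaces and obtain $x_{\pm N}(v_1\otimes w_1)=0$ in one step, then finish with the commutator. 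This is cleaner and, notably, does not actually use the tensor decomposition $V\otimes W$—only that the module is a weight module with $c$ acting injectively—so it proves a slightly stronger statement than the paper's version.

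Two small points worth tightening: the $v_i\otimes w_i$ are not assumed to be weight vectors, so your single weight $\mu_i$ should be replaced by the (still finite) weight support of $v_i\otimes w_i$; and $\widetilde{\mathcal W}_{\mathfrak l}$ only demands that $c$ act injectively, not as a scalar, so the final contradiction should read $c(v_1\otimes w_1)\neq 0$ rather than $c=a\neq 0$. Neither affects the substance of your argument.
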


 \begin{proof}
 Assume
  $$z=x_{-N}(v_1\otimes w_1) +\sum_{i=2}^k a_i v_i\otimes w_i=0.$$
Suppose first that  $[x_{\pm N}, \mathfrak l_J^0]=0$, that is $x_{\pm N}\in G_J$.  Then 
  $$x_{N}z=v_1\otimes x_{N}x_{-N}w_1 +\sum_{i=2}^k a_i v_i\otimes x_{N}w_i=0.$$
  If $N$ is sufficiently large then $x_{N}w_i$ have different gradings than $w_1$, $i=2, \ldots, k$ and thus $x_{N}x_{-N}w_1=0$. Similarly, if 
   $v_1\otimes x_{N}w_1 +\sum_{i=2}^k b_i v_i\otimes w_i=0$ then $x_{-N}x_{N}w_1=0$. But this is a contradiction. 
   Consider now a general case. We have $x_{\pm N}=x_{\pm N}^1+x_{\pm N}^2$ where $[x_{\pm N}^1, \mathfrak l_J^0]=0$ and $x_{\pm N}^2\in \mathfrak l_J^0$.
   Moreover,   $x_{\pm N}^1\neq 0$ since $x_{\pm N}\notin \mathfrak l_J^0$ and $[x_{N}^1,  x_{-N}^1]\neq 0$. 
   Then
  $$z=v_1\otimes x_{-N}^1w_1+ x_{-N}^2v_1\otimes w_1 +\sum_{i=2}^k a_i v_i\otimes w_i=0$$ and 
  $$x_{N}^1z=v_1\otimes x_{N}^1x_{-N}^1w_1+ x_{-N}^2v_1\otimes x_{N}^1w_1 +\sum_{i=2}^k a_i v_i\otimes x_{N}^1w_i=0. $$ Then we proceed as in the previous case and conclude $x_{N}^1x_{-N}^1w_1=0$. Replacing $N$ by $-N$ we obtain $x_{-N}^1x_{N}^1w_1=0$ implying $w_1=0$ which is a contradiction. This completes the proof. 
 \end{proof}

  \begin{corollary}\label{cor-ht-not-empty}
  Let $U\simeq V\otimes W$ be a tensor module in $\widetilde{W}_{\mathfrak l}$,  where $V$ is irreducible weight $\mathfrak l_J^0$-module and $W$ is irreducible 
  $G_J$-module. For a
nonzero weight element  $v\in M_J(U)$ with $\mathsf {ht}_{J}(v)=1$  there exists $u\in U(\mathfrak n_J)$ such that $uv\neq 0$ and 
$\mathsf {ht}_{J}(uv)=0$.
 \end{corollary}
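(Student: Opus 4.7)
The argument parallels the proof of Corollary~\ref{cor-ht-empty}, with Lemma~\ref{lem-tensor-module} replacing Lemma~\ref{lem-heis} to accommodate the tensor structure $U\simeq V\otimes W$. Write $v=\sum_{r\in R} d_r w_r$ with $d_r\in \mathfrak n_{\bar J}$ linearly independent homogeneous of weights $\phi_r$, and $w_r\in U$. The hypothesis $\mathsf{ht}_J(v)=1$ forces each $\phi_r$ to have the form $-\alpha_{i_{0,r}}+\gamma_r+k_r\delta$ with $i_{0,r}\in \bar J$, $\gamma_r\in Q^J$, $k_r\in\Z$. Fix $r_0\in R$, put $\phi=\phi_{r_0}$, and for a positive integer $N$ choose nonzero $d_\pm\in \G_{-\phi\pm N\delta}$ with $[d_\pm,d_{r_0}]\neq 0$. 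The weight $-\phi\pm N\delta$ carries $\alpha_{i_{0,r_0}}$ with coefficient $+1$ and no other simple root outside $\pi^J$, so it belongs to $P_{\mathrm{nat}}\setminus \D^J$; hence $d_\pm\in \mathfrak n_J$ and $d_\pm w_r=0$ in $M_J(U)$, giving $d_\pm v=\sum_{r\in R}[d_\pm,d_r]w_r$.

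The weight of $[d_\pm,d_r]$ is $(\phi_r-\phi)\pm N\delta$, and its $\bar J$-component equals $\alpha_{i_{0,r_0}}-\alpha_{i_{0,r}}$. For $i_{0,r}\neq i_{0,r_0}$ this expression has simple-root coefficients of opposite signs and hence is not a root of $\g$, forcing $[d_\pm,d_r]=0$. Setting $R_0:=\{r\in R:i_{0,r}=i_{0,r_0}\}$, the surviving bracket weights lie in $Q^J+\Z\delta$, so $[d_\pm,d_r]\in \mathfrak l_J$ for all $r\in R_0$. Consequently $d_\pm v\in U$ and $\mathsf{ht}_J(d_\pm v)=0$, and what remains is to show that either $d_+v\neq 0$ or $d_-v\neq 0$ for some $N$.

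Put $x_{\pm N}:=[d_\pm,d_{r_0}]\in G_{\pm N\delta}$. Since $d_{r_0}$ is a real root vector for a root outside $\dot \D^J$, the commutator $x_{\pm N}$ equals (modulo the centre) a nonzero multiple of $h_{\phi_{r_0}}\otimes t^{\pm N}$, and the coroot $h_{\phi_{r_0}}$ has nontrivial projection onto the orthogonal complement of $\mathrm{span}\{h_j:j\in J\}$ in the Cartan of $\g$ --- precisely the part that contributes the $G_J$-direction at grade $\pm N\delta$. Choosing $d_\pm$ accordingly we obtain $x_{\pm N}\notin \mathfrak l_J^0$ and $[x_N,x_{-N}]\neq 0$, the hypotheses of Lemma~\ref{lem-tensor-module}. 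Now expand $w_{r_0}$ in a weight basis of $V\otimes W$ and isolate a pure tensor $v_1\otimes w_1$ that appears with nonzero coefficient. Every other contribution to $d_\pm v$ --- the action of $x_{\pm N}$ on the remaining pure tensor summands of $w_{r_0}$, and the terms $[d_\pm,d_r]w_r$ for $r\in R_0\setminus\{r_0\}$ --- takes the form of a $U(\mathfrak l_J)$-action on a pure tensor in $V\otimes W$, so $d_\pm v$ fits the template $x_{\pm N}(v_1\otimes w_1)+\sum a_i\, v_i\otimes w_i$ of Lemma~\ref{lem-tensor-module}. That lemma therefore gives $d_+v\neq 0$ or $d_-v\neq 0$ for $N$ sufficiently large, and the corresponding $d_\pm$ is the required $u\in U(\mathfrak n_J)$.

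The key difficulty is verifying the Lemma~\ref{lem-tensor-module} hypothesis $x_{\pm N}\notin \mathfrak l_J^0$. This reduces to the Cartan-theoretic fact that the coroot of a root outside $\dot \D^J$ lies outside $\mathrm{span}\{h_j:j\in J\}$, and it is exactly here that the tensor factor $W$ --- an irreducible $G_J$-module with nonzero central charge --- enters and supplies the irreducibility needed in Lemma~\ref{lem-tensor-module}.
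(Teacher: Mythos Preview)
Your argument is correct and follows essentially the same route as the paper: pick $r_0$, act by root vectors $d_\pm\in\G_{-\phi\pm N\delta}\subset\mathfrak n_J$, reduce to brackets $[d_\pm,d_r]\in\mathfrak l_J$, and invoke Lemma~\ref{lem-tensor-module}. You supply several details the paper leaves implicit --- why $-\phi\pm N\delta\in P_{\rm nat}\setminus\D^J$, why the brackets with $r\notin R_0$ vanish (the sign argument on simple-root coefficients), and above all why $x_{\pm N}=[d_\pm,d_{r_0}]$ satisfies the hypothesis $x_{\pm N}\notin\mathfrak l_J^0$ of Lemma~\ref{lem-tensor-module}, via the coroot computation --- so your version is in fact more complete than the paper's, which simply writes $v=\sum d_r(v_r\otimes w_r)$ with pure tensors and applies the lemma directly.
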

 
 \begin{proof}
Let  $v=\sum_{r\in R} d_r (v_r\otimes w_r),$ where 
 $d_r\in \mathfrak n_{\bar J}$ are linearly independent, $v_r\in V$, $w_r\in W$, $r\in R$. We proceed as in the proof of Corollary~\ref{cor-ht-empty}. 
 Fix $r_0\in R$ and assume $d_{r_0}\in \G_{\phi}$.  Choose an integer 
$N$ and
  let $t_{\pm N}\in\G_{-\phi \pm N\delta}\subset \mathfrak n_{J}$ be  nonzero elements. Then $x_{\pm N}=[t_{\pm N}, d_{r_0}]\neq 0$.
  Moreover, $[x_N, x_{-N}]\neq 0$ and 
  we have $$t_{\pm N}v=t_{\pm N} d_{r_0}(v_{r_0}\otimes w_{r_0}) + t_{\pm N} \sum_{r\in R, r\neq r_0} d_r(v_r \otimes w_r)=$$
  $$=x_{\pm N}(v_{r_0}\otimes w_{r_0}) +  \sum_{r\in R, r\neq r_0}[t_{\pm N}, d_r](v_r \otimes w_r).$$ Applying Lemma~\ref{lem-tensor-module} 
  we conclude that  at least one of $t_{\pm N}v$ is not zero. Since  $\mathsf {ht}_{J}(t_{\pm N}v)=0$ the corollary is proved.
 \end{proof}

 We can now prove  Theorem \ref{thm-glav}. 
 
 \subsection{Proof of Theorem \ref{thm-glav}} Let 
 $v\in T_J(U)$ be a nonzero weight element and $\mathsf {ht}_{J}(v)=s\geq 1$. Consider a $\G$-submodule $\mathcal N$ of $M_J(U)$ generated by 
 $v$, $\mathcal N=U(\G)v$. Since $\mathfrak n_J v=0$ we have that  $\mathcal N$ is a proper submodule of $M_J(U)$ and all its weight elements have 
 $\mathsf {ht}_{J}$ greater or equal than $s$. But this is a contradiction since by
  Lemma~\ref{lem-ht-big}, Corollary~\ref{cor-ht-not-empty} and Corollary~\ref{cor-ht-empty} we can always find $u\in (\G)$ such that 
  $uv\neq 0$ and  $\mathsf {ht}_{J}(uv)=\mathsf {ht}_{J}(v)-1$. Therefore $\mathsf {ht}_{J}(v)$ must be zero and $T_J(U)=M^t_J(U)$.
 
 Applying Theorem \ref{thm-glav} we have 
 
   \begin{corollary}\label{cor-irr}
   If $U$ is an irreducible  admissible tensor module in $\widetilde{W}_{\mathfrak l}$ then the induced module
$M_J(U)$ is irreducible.
   
   \end{corollary}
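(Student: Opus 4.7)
The plan is to argue by contradiction, using the height-reduction machinery of Lemma~\ref{lem-ht-big} together with Corollaries~\ref{cor-ht-empty} and~\ref{cor-ht-not-empty}; in fact only these preparatory results are needed, not Theorem~\ref{thm-glav} itself. So I would suppose $\mathcal N \subsetneq M_J(U)$ is a nonzero proper submodule and work to produce a nonzero element of $M^t_J(U)$ inside $\mathcal N$, which will force $\mathcal N = M_J(U)$ and give the contradiction.

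Since $\mathcal N$ inherits a weight-space decomposition from $M_J(U)$, the well-ordering of $\mathbb Z_{\geq 0}$ lets me pick a nonzero weight vector $v \in \mathcal N$ of minimal $\mathsf{ht}_J$-value. If $\mathsf{ht}_J(v) \geq 2$, Lemma~\ref{lem-ht-big} supplies $u \in U(\mathfrak n_J)$ with $uv \neq 0$ and $\mathsf{ht}_J(uv) = \mathsf{ht}_J(v) - 1$; if $\mathsf{ht}_J(v) = 1$, then Corollary~\ref{cor-ht-empty} (when $J = \emptyset$) or Corollary~\ref{cor-ht-not-empty} (when $J \neq \emptyset$, invoking the admissible tensor structure $U \cong V \otimes W$) supplies $u \in U(\mathfrak n_J)$ with $uv \neq 0$ and $\mathsf{ht}_J(uv) = 0$. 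Crucially all three of these statements apply to an arbitrary nonzero weight element of $M_J(U)$, not merely to $\mathfrak n_J$-invariants. Since $uv \in U(\G) v \subset \mathcal N$, either case contradicts the minimality of $\mathsf{ht}_J(v)$ unless that minimum is already $0$.

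Once $\mathsf{ht}_J(v) = 0$, the definition of $\mathsf{ht}_J$ forces $v \in 1 \otimes U = M^t_J(U)$, since the only $U(\mathfrak n_{\bar J})$-homogeneous elements of $\bar J$-degree zero are scalars. Viewing $M^t_J(U) \cong U$ as an $\mathfrak l$-module and using the irreducibility of $U$, I obtain $U(\mathfrak l) v = M^t_J(U)$, hence $M^t_J(U) \subset \mathcal N$. The PBW-type identity $M_J(U) = U(\mathfrak n_{\bar J}) \cdot M^t_J(U) = U(\G) \cdot M^t_J(U)$, which is part (i) of the Proposition in the previous section, then forces $\mathcal N = M_J(U)$, contradicting properness.

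The real work has been absorbed into the preparatory lemmas: the only nontrivial step is the $\mathsf{ht}_J(v) = 1$ reduction, where Lemma~\ref{lem-ht-big} breaks down and one relies on the commutator computation of Lemma~\ref{lem-heis} for the Heisenberg factor (when $J = \emptyset$) or on the tensor-product estimate of Lemma~\ref{lem-tensor-module} (when $J \neq \emptyset$). These are precisely the places where the admissibility hypothesis on $U$ and its tensor structure enter essentially. With those tools in hand, the corollary reduces to the well-ordering argument above combined with the irreducibility of $U$ as an $\mathfrak l$-module and the PBW freeness of $M_J(U)$ over $U(\mathfrak n_{\bar J})$.
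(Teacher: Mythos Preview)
Your proof is correct and uses essentially the same height-reduction argument as the paper. The paper routes it through Theorem~\ref{thm-glav} (showing $T_J(U)=M^t_J(U)$) and then invokes that theorem for the corollary, whereas you apply Lemma~\ref{lem-ht-big} and Corollaries~\ref{cor-ht-empty}/\ref{cor-ht-not-empty} directly to a minimal-height weight vector of a putative proper submodule; since those three results indeed apply to arbitrary nonzero weight elements, not just $\mathfrak n_J$-invariants, this shortcut is legitimate and the remaining steps (irreducibility of $U$ plus freeness over $U(\mathfrak n_{\bar J})$) are exactly as in the paper.
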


 Corollary~\ref{cor-irr} immediately implies Theorem~\ref{the-main} which provides a powerful tool to construct new irreducible representations for  affine Lie algebras by inducing from 
 irreducible tensor modules from $\widetilde{W}_{\mathfrak l}$. We conclude with the following observation.

 \begin{remark} 
 It would be interesting to see if Theorem \ref{thm-glav} extends to any admissible  tensor module $U$ in $\widetilde{W}_{\mathfrak l}$. This would lead to an equivalence of  certain subcategories of $\G$-modules and $\mathfrak l_J$-modules. Another problem is to check if  admissible tensor modules exhaust all irreducible modules in 
 $\widetilde{W}_{\mathfrak l}$. 
 \end{remark}

\section{Acknowledgment}
V.F. is
supported in part by the CNPq grant (301320/2013-6) and by the
Fapesp grant (2014/09310-5). I.K. is supported by the CNPq grant (309742/2013-7)   and by the
Fapesp grant (2016/08740-1). The authors are grateful to the referee for useful remarks.

\end{document}